\theoremstyle{plain}
\newtheorem{thm}{Theorem}[section]
\newtheorem{lem}[thm]{Lemma}
\newtheorem{cor}{Corollary}
\theoremstyle{definition}
\newtheorem{defn}{Definition}[section]
\newtheorem{conj}{Conjecture}[section]
\newtheorem{exmp}{Example}[section]
\theoremstyle{remark}
\theoremstyle{remark}
\definecolor{darkblue}{rgb}{0,0,0.5} 
\begin{document}

\title{Coarsely separation of groups and spaces.}
%\author{Panagiotis Tselekidis \\ panagiotis.tselekidis@maths.ox.ac.uk}

\author{Panagiotis Tselekidis}

%\author{Panagiotis Tselekidis \\ email: \href{mailto:panagiotis.tselekidis@maths.ox.ac.uk}{panagiotis.tselekidis@maths.ox.ac.uk}} 

\maketitle

%\href{mailto:panagiotis.tselekidis@maths.ox.ac.uk}{panagiotis.tselekidis@maths.ox.ac.uk}
%\email{\textit{email:} panagiotis.tselekidis@maths.ox.ac.uk}

%\address{Mathematical Institute, University of Oxford, 24-29 St Giles',
%Oxford, OX1 3LB, U.K.}

\begin{abstract} 
Inspired by a classical theorem of topological dimension theory, we prove that every geodesic metric space of asymptotic dimension $n$ containing a bi-infinite geodesic can be coarsely separated by a subset $S$ of asymptotic dimension equal to or smaller than $n-1$.\\
We define asymptotic Cantor manifolds, and we prove that every finitely generated group contains such a manifold.
We also state some questions related to them.
\end{abstract}

\tableofcontents

\section{Introduction.}
In 1993, M. Gromov introduced the notion of the asymptotic dimension of metric spaces (see \cite{Gr}) as an invariant of 
finitely generated groups. It can be shown that if two metric spaces are quasi isometric then they have the same asymptotic dimension.

In 1998, the asymptotic dimension achieved particular prominence in geometric group theory after a paper of Guoliang Yu, (see \cite{Yu}) which proved the Novikov higher signature conjecture for manifolds whose fundamental group
has finite asymptotic dimension. Unfortunately, not all finitely presented groups have finite asymptotic dimension. For example, Thompson's group $F$ has infinite
asymptotic dimension since it contains $\mathbb{Z}^{n}$ for all $n$.\\
However, we know for many classes of groups that they have finite asymptotic dimension, for instance, hyperbolic, relatively hyperbolic, Mapping Class Groups of surfaces, Coxeter groups, and one relator groups have finite asymptotic dimension (see \cite{BD08}, \cite{Os}, \cite{BBF}, \cite{Mats}, \cite{PT},\cite{TJ}, \cite{PTA}, \cite{PTR}).\\

The asymptotic dimension $asdimX$ of a metric space $X$ is defined as follows: $asdimX \leq n$ if and only if for every $R > 0$ there exists a uniformly bounded covering $\mathcal{U}$ of $X$ such that the R-multiplicity of $\mathcal{U}$ is smaller than or equal to $n+1$ (i.e. every R-ball in $X$ intersects at most $n+1$ elements of $\mathcal{U}$).\\

The asymptotic dimension theory bears a great deal of resemblance to dimension
theory in topology, so it is natural to ask whether results of topological dimension theory have an asymptotic analogue.

It is known that every topological space $X$ of dimension $n \geq 1$ can be separated by a subset $Y$ of dimension equal to or smaller than $n-1$ (see \cite{HuWa}). By separation we mean that the space $X \setminus Y$ contains at least two connected components. One may ask whether there exists an asymptotic analogue of that for metric spaces.

The aim of this paper is to show that for geodesic metric spaces containing a bi-infinite geodesic an asymptotic analogue exists, and to prove, for groups, an asymptotic version of Aleksandrov's theorem (\ref{Coarse.4}).\\

Let $X$ be a metric space and $K \subseteq X$. 
%We say that $K$ is \textit{deep} subset of $X$ if $X$ does not contained in $N_{R}(K)$ for every $R > 0$.\\
We say that a component $C$ of $X \setminus N_{R}(K)$ is \textit{deep} if $C$ is not contained in $N_{R^{\prime}}(K)$ for any $R^{\prime} > 0$. We recall that by $N_R(A)$ we denote the $R$-neighbourhood of a set $A$ (i.e. all the points of distance at most $R$ from $A$).\\
We say that $K$ \textit{weakly coarsely separates} $X$ if there is an $R > 0$ such that $X \setminus N_{R}(K)$ contains at least two deep components of $X$ and at least one of them is connected.\\
We say that $K$ \textit{coarsely separates} $X$ if there is an $R > 0$ such that $X \setminus N_{R}(K)$ contains at least two connected deep components of $X$.
We say that a finitely generated group $G$ can be (weakly) coarsely separated by a set $Y$ if there exists a finite generating set $S$ such that $Y \subseteq Cay(G,S)$ and $Y$ (weakly) coarsely separates the Cayley graph.

\begin{flushleft}
\textbf{Convension:} In what follows we consider only metric spaces with finite asymptotic dimension.
\end{flushleft}

We show that:

\begin{thm}\label{Coarse.1}
Let $X$ be a geodesic metric space containing a bi-infinite geodesic. If $asdimX=n>0$, then there exists a subspace $Y$ of asymptotic dimension strictly less than $n$ which coarsely separates $X$. 
\end{thm}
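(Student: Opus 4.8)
The idea is to realise $Y$ as the preimage of an $(n-1)$-dimensional ``wall'' under a nerve map coming from a uniformly bounded cover of $X$, and to use the two rays of the bi-infinite geodesic as the pair of sets that $Y$ must coarsely separate. Write $\gamma\colon\mathbb{R}\to X$ for the geodesic and $\gamma_{+}=\gamma([0,\infty))$, $\gamma_{-}=\gamma((-\infty,0])$ for its two rays. The geometric role of $\gamma$ is that these rays escape to infinity in opposite directions: for $s\ge 0$ the nearest point of $\gamma_{-}$ to $\gamma(s)$ is $\gamma(0)$, so $d(\gamma(s),\gamma_{-})=s$, and symmetrically on the other side. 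Consequently it suffices to construct a set $Y$ with $\asdim Y\le n-1$ such that, after deleting a suitable neighbourhood, $\gamma_{+}$ and $\gamma_{-}$ lie in two distinct components of the complement; the deepness of these components will follow from the fact that the rays escape every bounded neighbourhood of $Y$, which I verify at the end.

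For the main construction, use $\asdim X=n$: fix a large scale $R$ and choose a uniformly bounded cover $\mathcal{U}$ of $X$ of mesh $\le D$ whose $R$-multiplicity is $\le n+1$. A Lipschitz partition of unity subordinate to $\mathcal{U}$ gives a large-scale Lipschitz map $p\colon X\to \mathcal{N}$ to the nerve $\mathcal{N}=\mathcal{N}(\mathcal{U})$, a uniformly locally finite simplicial complex of dimension $\le n$, equipped with the path metric and uniformly bounded simplices. Because $\mathcal{U}$ is uniformly bounded, the preimage under $p$ of every simplex lies in the union of the $\le n+1$ cover elements spanning it, hence has diameter $\le (n+1)D$; in particular all fibres of $p$ are uniformly bounded. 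Now apply the topological separation theorem quoted in the introduction (in its partition form, separating the two disjoint closed sets determined by the tails of $p(\gamma_{+})$ and $p(\gamma_{-})$) to the $n$-dimensional complex $\mathcal{N}$: this produces a subcomplex $W\subseteq\mathcal{N}$ of covering dimension $\le n-1$ whose complement separates these two tails. Set $Y=p^{-1}(W)$. Restricting $p$ gives a large-scale Lipschitz map $Y\to W$ with uniformly bounded fibres, so a Hurewicz-type inequality (bounded fibres do not raise asymptotic dimension) yields $\asdim Y\le \asdim W\le \dim W\le n-1$, which is the dimension bound we need.

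It then remains to check that $Y$ coarsely separates $X$. By construction $p$ maps $X\setminus Y$ into $\mathcal{N}\setminus W$, whose two relevant components receive the tails of $\gamma_{+}$ and $\gamma_{-}$ respectively. Choosing $\rho$ large enough, $X\setminus N_{\rho}(Y)$ contains the component $C_{+}$ of the tail of $\gamma_{+}$ and the component $C_{-}$ of the tail of $\gamma_{-}$, and these are distinct: any path joining them would project to a path in $\mathcal{N}$ crossing $W$, hence would meet $Y$. Finally each component is deep, because as $s\to\infty$ the point $p(\gamma(s))$ recedes from $W$ in $\mathcal{N}$, and since $p$ is large-scale Lipschitz this forces $d(\gamma(s),Y)\to\infty$, so $C_{+}$ (and likewise $C_{-}$) is not contained in any $N_{R'}(Y)$. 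Thus $X\setminus N_{\rho}(Y)$ has two connected deep components and $Y$ coarsely separates $X$.

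The step I expect to be the main obstacle is the interface between the coarse and the topological separation, namely guaranteeing that the nerve map $p$ does not ``collapse'' the geodesic: one must ensure that $p(\gamma_{+})$ and $p(\gamma_{-})$ are properly embedded and genuinely far apart in $\mathcal{N}$, so that the topological separator $W$ pulls back to a set whose deleted neighbourhood really disconnects the two rays at the coarse scale and produces \emph{deep} (not merely bounded) components. Equivalently, the delicate point is to perform the whole construction at a single sufficiently large scale $R$ so that one fixed set $Y$ simultaneously satisfies $\asdim Y\le n-1$ and coarsely separates $X$; keeping the properness of $p$ along $\gamma$ compatible with the bounded-fibre control is where the care is required.
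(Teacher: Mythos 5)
Your reduction to the nerve breaks down at the step $\asdim W\le \dim W\le n-1$: for a (uniformly locally finite) simplicial complex with the path metric, the asymptotic dimension is \emph{not} bounded by the simplicial dimension. Already the Cayley graph of $\mathbb{Z}^{k}$ is a $1$-dimensional complex of asymptotic dimension $k$, so the $(n-1)$-dimensional topological separator $W\subseteq\mathcal{N}$ produced by the Hurewicz--Wallman partition theorem may perfectly well have $\asdim W=n$ (indeed $\mathcal{N}$ itself is coarsely equivalent to $X$, so its metric geometry retains all of $\asdim X$, independently of its simplicial dimension). The Hurewicz-type inequality for the bounded-fibre map $p|_Y\colon Y\to W$ is fine, but it transfers the wrong quantity: you get $\asdim Y\le \asdim W$, and nothing controls $\asdim W$. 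This is exactly the obstruction you flag in your last paragraph, phrased slightly differently: a construction performed at a single scale $R$ can only certify that some cover of the separator has small $R$-multiplicity at that one scale, whereas $\asdim Y\le n-1$ is a statement about all scales simultaneously. A secondary gap is deepness: the partition theorem gives no lower bound on $d_{\mathcal{N}}(p(\gamma(s)),W)$, so $W$ (hence $Y$) may accumulate along the ray and the asserted divergence $d(\gamma(s),Y)\to\infty$ does not follow.

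The paper's proof is built precisely to circumvent this. Instead of one separator at one scale, it produces, for each $k$, a sphere-like set $\Gamma_k$ (the union of ``descendants'' of the cover elements meeting a metric sphere $S_d$ --- taking first entry points of paths from the identity drops the $R$-multiplicity from $n+1$ to $n$), and within each $\Gamma_k$ it iterates this procedure over a decreasing chain of scales $R_k>R_{k-1}>\dots>R_1$ so that $\Gamma_k$ carries good covers at many scales at once. These sets are then translated far apart along the bi-infinite geodesic, so that for any given $R$ only the tail of the sequence matters and a single uniformly bounded cover of $\bigcup_k\Gamma_k$ with $R$-multiplicity $\le n$ can be assembled; the deep components are guaranteed because each $\Gamma_k$ genuinely encloses a ball around its basepoint. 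If you want to salvage a nerve-based argument, you would have to run it at infinitely many scales and glue the resulting separators, at which point you essentially recover the paper's construction.
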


Firstly, we show that: 
\begin{thm}\label{Coarse.2}
Every finitely generated group $G$ can be weakly coarsely separated by a set $Y$ of asymptotic dimension strictly less than $asdimG$ (provided that $asdimG>0$).
\end{thm}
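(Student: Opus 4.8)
The plan is to reduce the separation problem to a genuinely finite-dimensional, piecewise-linear situation, in which a separating wall of the correct dimension can be produced by a general-position argument, and then to transport the conclusion back to $G$ by a quasi-isometry. Throughout I write $\Gamma=\mathrm{Cay}(G,S)$ for a fixed finite generating set $S$ and $n=\asdim G>0$; note that $\asdim G>0$ forces $G$ to be infinite. First I would record the elementary fact that $\Gamma$ contains a bi-infinite geodesic: since $G$ is infinite its Cayley graph has unbounded diameter, so geodesic segments of every length $2m$ exist, and left-translating each such segment so that its midpoint is the identity and then invoking local finiteness together with a diagonal (K\"onig/Arzel\`a--Ascoli) argument produces an isometric embedding $\gamma\colon\mathbb{Z}\to\Gamma$. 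The two tails $\gamma([m,\infty))$ and $\gamma((-\infty,-m])$ furnish two escaping directions, which is exactly what is needed to obtain \emph{two} deep components; a single geodesic ray, or the radial function $d(e,\cdot)$, would cut off only one deep component (the unbounded exterior) and so could never witness weak coarse separation of a one-ended group. The projection to $\gamma$, or equivalently a coarse signed coordinate $f\colon\Gamma\to\mathbb{R}$ with $f(\gamma(t))\approx t$, is $1$-Lipschitz and coarsely onto, and its level sets are the candidates for the wall.

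The heart of the matter is that one cannot simply take $Y=f^{-1}([-C,C])$: an arbitrary level set of a $1$-Lipschitz coarse map may have asymptotic dimension as large as $n$ (for instance $f=d(\cdot,F)$ for a suitable set $F$ of asymptotic dimension $n$), so the wall must be built using the hypothesis $\asdim G\le n$. To this end I would fix, for a large Lebesgue number $L$ and a uniform diameter bound $D$, a cover $\mathcal{U}$ of $\Gamma$ of multiplicity at most $n+1$ (the Lebesgue-number formulation equivalent to the definition above), and pass to its nerve $N$, an $n$-dimensional simplicial complex of bounded geometry. The nerve map $\phi\colon\Gamma\to|N|$ is a quasi-isometry: the multiplicity and diameter bounds give the Lipschitz upper estimate, while the observation that a chain of $m$ pairwise-overlapping members of $\mathcal{U}$ spans points at distance at most $mD$ gives the lower estimate. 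I would then push $f$ across $\phi$ to a simplicial (PL) map $\bar f\colon|N|\to\mathbb{R}$ that is still coarsely onto and whose two ends escape to $\pm\infty$.

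Now the decisive step is available. For a regular (non-vertex) value $c$ the preimage $W=\bar f^{-1}(c)$ is a PL subcomplex of $|N|$ of dimension at most $n-1$, because a hyperplane slice of a $k$-simplex is a cell of dimension $k-1$; being a bounded-geometry complex of dimension at most $n-1$, it satisfies $\asdim W\le n-1$. Moreover $W$ separates $\{\bar f>c\}$ from $\{\bar f<c\}$, and these two regions contain the images of the two tails of $\gamma$, hence each contains an unbounded, and therefore deep, piece. Pulling back through the quasi-isometry, $Y=\phi^{-1}(N_{1}(W))$ is a subset of $\Gamma$ with $\asdim Y=\asdim W\le n-1<n$, and the separation of $|N|$ by $W$ transfers, after enlarging the neighbourhood parameter $R$, to a separation of $\Gamma$ by $Y$ into two deep components, at least one of which---the one carrying a geodesic tail---is connected; this is precisely weak coarse separation. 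Finally $Y$, being a subset of the vertex set up to bounded error, is realized inside $\mathrm{Cay}(G,S)$ as required.

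I expect the main obstacle to lie in the bookkeeping of the middle two steps rather than in the clean general-position step. One must check that the single cover's nerve map is a genuine quasi-isometry with constants controlled by $L$ and $D$, so that both $\asdim$ and the \emph{deepness} of the components are preserved, and that the PL dimension bound on $W$ really yields the \emph{asymptotic} dimension bound on its metric pullback $Y$ with uniform constants. The most delicate point is ensuring that, after passing back and forth through $\phi$, each of the two separated regions retains an unbounded component, so that the components are genuinely deep and not merely escaping into $N_{R'}(Y)$.
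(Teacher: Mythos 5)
There is a genuine gap, and it sits exactly at the step you flag as ``decisive'': the claim that $W=\bar f^{-1}(c)$, being a bounded-geometry polyhedron of simplicial dimension at most $n-1$, satisfies $\asdim W\le n-1$. Simplicial (or covering) dimension does not bound asymptotic dimension: the Cayley graph of $\mathbb{Z}^n$ is a $1$-dimensional complex of bounded geometry with asymptotic dimension $n$. Worse, in your specific situation $W$ is $1$-dense in the union of the closed simplices it meets, hence coarsely dense in $\bar f^{-1}([c-\delta,c+\delta])$, so its pullback $Y$ is, up to bounded Hausdorff distance, exactly the set $f^{-1}([c-C,c+C])$ that you correctly identified at the outset as potentially having full asymptotic dimension. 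A concrete instance: for nearest-point projection to a geodesic in $\mathbb{H}^2$, the preimage of a unit interval is the region between two perpendicular geodesics, which has asymptotic dimension $2$; the PL level set in the nerve of any uniformly bounded cover of $\mathbb{H}^2$ is a graph coarsely dense in that region, so it is a $1$-complex with $\asdim=2$. The nerve therefore does not repair the problem it was introduced to solve. A related structural issue is that a single cover $\mathcal{U}$ at one scale $(L,D)$ can only control multiplicity at that scale, whereas $\asdim Y\le n-1$ is a statement about every scale $R$; no general-position argument performed at one scale can certify it.

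For contrast, the paper's proof obtains the drop from multiplicity $n+1$ to $n$ by a combinatorial device with no PL analogue: given a cover of $R$-multiplicity $n+1$, it replaces each set $U$ meeting a large sphere $S_d$ by its \emph{descendant} $U^\ast$, the set of first-entry points of paths from $1_G$ into the sphere family; an $(R-1)$-ball meeting $n+1$ descendants must also meet a set of the original cover lying off the sphere family, giving $n+2$ sets in an $R$-ball, a contradiction (Lemma \ref{Coasre.lem6.5}). To handle all scales simultaneously, the paper then iterates this via Lemma \ref{Coasre.lem6.6} to build, for each $n$, a single separating set admitting multiplicity-$\le n$ covers at scales $R_1,\dots,R_n$, and finally places these sets at rapidly increasing intervals along a bi-infinite geodesic so that, for any fixed $R$, all but finitely many pieces are both $R$-separated from each other and covered at scale $R$ with multiplicity $\le n$. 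If you want to salvage your approach, the missing ingredient is precisely some mechanism of this kind that lowers the multiplicity of an actual cover of the wall at \emph{every} scale, not a dimension count on a polyhedron at one scale.
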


The proof of this theorem can be used to show that 
\begin{thm}\label{Coarse.3}
Every finitely generated group $G$ can be coarsely separated by a set $Y$ of asymptotic dimension strictly less than $asdimG$ (provided that $asdimG>0$).
\end{thm}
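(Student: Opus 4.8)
The plan is to leverage Theorem~\ref{Coarse.2}, which already produces a weak coarse separation of any finitely generated group $G$ by a set $Y$ with $\asdim Y < \asdim G$. The whole task reduces to upgrading ``weak'' to ``genuine'' coarse separation, that is, ensuring that \emph{two} of the deep components are connected rather than just one. So the first step is to run through the proof of Theorem~\ref{Coarse.2} and examine the geometric picture it produces: we have a separating set $Y \subseteq \mathrm{Cay}(G,S)$ and, for a suitable $R>0$, the complement $\mathrm{Cay}(G,S)\setminus N_R(Y)$ has at least two deep components, one of which is connected. I would isolate exactly where the asymmetry between the two sides enters, since that is the only place the argument needs strengthening.

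The key observation I would exploit is the homogeneity of a Cayley graph: $G$ acts on $\mathrm{Cay}(G,S)$ by isometries, transitively on vertices. Suppose the construction in Theorem~\ref{Coarse.2} gives a connected deep component $C_1$ and a second deep component $C_2$ that may be disconnected. The natural move is to refine or translate the separating set so that both emerging deep regions are forced to be connected. Concretely, I would try to take the connected component $C_1$ that the theorem guarantees and consider the set $Y' = \partial C_1$, the coarse boundary of $C_1$ inside $\mathrm{Cay}(G,S)$: one expects $\asdim Y' \le \asdim Y$ because $Y'$ sits coarsely inside $N_R(Y)$, and by definition $C_1$ is one connected deep component lying on one side of $Y'$. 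The point of passing to the boundary of a single connected deep piece is that it automatically yields \emph{two} clean sides, namely $C_1$ itself and (a possibly large chunk of) its complement, turning the single guaranteed connected component into a genuine two-sided separation.

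The main obstacle I anticipate is ensuring that the \emph{other} side of $Y'$ also contains a connected deep component, and that it is genuinely connected and deep, not merely nonempty or bounded. After passing to $\partial C_1$ one has to verify: (i) that $\mathrm{Cay}(G,S)\setminus N_R(Y')$ still has a deep component disjoint from $C_1$, which should follow because the second deep component $C_2$ from Theorem~\ref{Coarse.2} survives and lies on the far side; and (ii) that we can arrange \emph{this} far component to be connected. For (ii), the cleanest route is to replace $C_2$ by a single connected subcomponent of it that is still deep: since $C_2$ is unbounded and deep, and since in a geodesic space (here, a graph) an unbounded region must contain an unbounded connected piece reachable by bounded-length detours, one of the connected components of $C_2$ must itself be deep. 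Formalizing this last step — guaranteeing that a deep (possibly disconnected) region contains a connected deep subregion, with the asymptotic dimension bound on the separating set preserved — is where the real care is needed, and I expect it to be the crux of the argument. Once both sides are connected and deep for a common $R$, the definition of coarse separation is met and the asymptotic dimension bound $\asdim Y' < \asdim G$ transfers directly from Theorem~\ref{Coarse.2}.
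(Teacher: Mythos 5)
Your reduction to Theorem \ref{Coasre.thm6.4} is a reasonable starting point, but the step you yourself identify as the crux is exactly where the argument breaks down. You claim that the second deep region $C_2$, being unbounded and deep, must contain a single connected component that is itself deep. This is false in general, and it fails precisely for the weak separation that the proof of Theorem \ref{Coasre.thm6.4} produces. There the separating set is $\Gamma=\bigcup_n\Gamma_n$, a union of sphere-like sets $\Gamma_n$ centred at points $x_n$ marching to infinity along a geodesic; the connected deep component $C_1$ is the ``outside'' of all the $\Gamma_n$, while the second deep region is the union of the bounded ``insides'' $B_{d_{R_n}/2}(x_n)$. That union is deep only because the radii tend to infinity; each of its connected components is bounded, hence contained in $N_{R'}(\Gamma)$ for some $R'$ depending on $n$, and therefore not deep. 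Passing to $Y'=\partial C_1$ does not change this picture: the far side of $Y'$ is still the same scattering of bounded balls, so no choice of $R$ produces a second \emph{connected} deep component. (The wedge-of-intervals example after Theorem \ref{Coarse.thm6.9} makes the same point in a simpler setting; an unbounded subset of a geodesic space need not contain any unbounded connected piece, so ``deep'' does not localize to a single component by any soft argument.)

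What the paper actually does is go back inside the construction rather than post-process its output. It builds a second sequence of separating sets $T_n$ (``tubes'') around the geodesic segments joining $x_n$ to $x_{n+1}$, obtained by running the covering arguments of Lemmas \ref{Coasre.lem6.5} and \ref{Coasre.lem6.6} on neighbourhoods of geodesic segments instead of spheres, and then cuts windows: from each tube it deletes the part near the balls about its endpoints, and from each sphere $\Gamma_n$ it deletes the part near the geodesic. The windows let the insides of all the spheres connect to one another through the tubes, yielding one connected deep component containing the ray $c([0,\infty))$, while the outside remains a second connected deep component containing $c((-\infty,-N])$. The dimension bound survives because $Y$ lies in the union of the two families, each of asymptotic dimension strictly less than $\asdim G$. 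So the missing ingredient in your proposal is this explicit connecting construction; without it, the extraction of a connected deep piece from $C_2$ is not merely unjustified but impossible for the $C_2$ at hand.
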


The strategy of proof of theorem \ref{Coarse.1} is almost identical to that of theorem \ref{Coarse.3}. Thus we only prove theorem \ref{Coarse.3}. and theorem \ref{Coarse.2}.

We also give an example of a geodesic metric space which cannot be coarsely separated by any subset.\\

We note that for proper metric spaces the analogue of Theorem \ref{Coarse.1} might follows from \cite{DZ}.
%Then the strategy of the proof theorem \ref{Coarse.3} can be used to show that every graph $\Gamma$ containing a bi-infinite geodesic contains subsgraph $Y$ such that $asdimY < asdim \Gamma$ and which coarsely separates $\Gamma$. Applying 
%corollary 21 from \cite{BD08} we obtain theorem \ref{Coarse.1}.
We recall that a topological space $X$ of dimension $n$ is a \textit{Cantor $n$-manifold} if $X$ cannot be separated by a closed $(n-2)$-dimensional subset ($n \geq 2$). We give the following asymptotic analogue of Cantor manifolds:

\begin{defn}
Let $X$ be a connected metric space with $asdimX =n$, where $n \geq 2$. We say that $X$ is an \textit{$n$-asymptotic  Cantor manifold} if $X$ cannot be coarsely separated by a $k$-asymptotic dimensional subset, where $k \leq n\!-\!2$.
\end{defn}

In 1947 (see \cite{Aleksandrov}) P.S. Aleksandrov proved the following theorem:

\begin{thm} \label{Coarse.4}
For $n \geq2$, every $n$-dimensional Hausdorff topological space $X$ contains an $n$-dimensional Cantor manifold. 
\end{thm}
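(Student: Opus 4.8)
The plan is to prove Aleksandrov's theorem by a minimality argument in which the invariant that is preserved along descending chains is not the covering dimension itself (which can drop when one intersects a descending chain of compacta) but the existence of an \emph{essential system of partitions}. Throughout I would work with $X$ a compactum, which is the classical setting of the theorem; the step from a Hausdorff to a compact Hausdorff hypothesis is exactly where compactness gets used repeatedly.

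First I would recall the Eilenberg--Otto characterization of dimension: for a normal space $X$ one has $\dim X \ge n$ if and only if there is an essential $n$-system, i.e.\ $n$ pairs of disjoint closed sets $(A_i,B_i)_{i=1}^n$ such that for every choice of partitions $L_i$ separating $A_i$ from $B_i$ one has $\bigcap_{i=1}^n L_i \neq \emptyset$. Since $\dim X = n$, I fix such a system $(A_i,B_i)_{i=1}^n$ and consider the family $\mathcal{F}$ of closed subsets $F \subseteq X$ on which the traced system $(A_i \cap F, B_i \cap F)$ remains essential, ordered by reverse inclusion.

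The first genuine step is the chain condition: if $\{F_\alpha\}$ is a descending chain and $F = \bigcap_\alpha F_\alpha$, then $F \in \mathcal{F}$. I would argue by contradiction. Partitions $L_1,\dots,L_n$ in $F$ with empty common intersection can, by normality, be extended to partitions in a neighbourhood and hence in some $F_\alpha$; since the $L_i$ are closed sets with empty intersection and $X$ is compact, the empty intersection is already witnessed at a finite stage, contradicting essentiality of that $F_\alpha$. Zorn's lemma then produces a minimal $M \in \mathcal{F}$, and by the characterization $\dim M \ge n$, so $\dim M = n$ as $M \subseteq X$.

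Finally I would show $M$ is a Cantor $n$-manifold. Suppose a closed set $C \subseteq M$ with $\dim C \le n-2$ separates $M$, so that $M = P \cup Q$ with $P,Q$ proper closed subsets and $P \cap Q \subseteq C$, whence $\dim(P \cap Q) \le n-2$. By minimality of $M$ neither $P$ nor $Q$ lies in $\mathcal{F}$, so on each piece the traced system is inessential, yielding partitions $L_i^P$ in $P$ and $L_i^Q$ in $Q$ with $\bigcap_i L_i^P = \emptyset$ and $\bigcap_i L_i^Q = \emptyset$. The crux, and the step I expect to be the main obstacle, is to glue these into partitions $L_i$ of all of $M$ with $\bigcap_i L_i = \emptyset$, which would contradict essentiality of $M$. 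This is precisely where $\dim(P \cap Q) \le n-2$ enters: since $P \cap Q$ carries no essential $(n-1)$-system, $n-1$ of the partitions can be chosen to match across $P \cap Q$ with empty intersection there, leaving one coordinate free to reconcile the remaining partition, so the deficit of one dimension supplies exactly the room needed to align $L_i^P$ with $L_i^Q$ along $P \cap Q$. Carrying out this gluing rigorously is the technical heart of the argument.
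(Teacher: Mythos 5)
First, a point of comparison: the paper does not prove this statement at all. Theorem \ref{Coarse.4} is quoted as classical background and attributed to Aleksandrov's 1947 paper \cite{Aleksandrov}; the author's own contributions are the asymptotic analogues in Section 4, whose proofs are entirely different in nature. So there is no in-paper argument to measure yours against, and your proposal has to be judged as a reconstruction of the classical proof. On that score your skeleton is the standard one (essential families in the sense of Eilenberg--Otto, a minimal closed subspace obtained from the chain condition plus Zorn, and a union lemma showing that minimality forces the Cantor manifold property), and your remark that ``Hausdorff'' must be strengthened to ``compact Hausdorff'' is correct and necessary: both the finite-stage witness of an empty intersection along a descending chain and the extension of partitions to a neighbourhood genuinely use compactness.

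The gap is the one you yourself flag: the union lemma. The statement you need is that if $(A_i,B_i)_{i=1}^n$ is essential on $M=P\cup Q$ with $P,Q$ closed and $\dim(P\cap Q)\le n-2$, then it remains essential on $P$ or on $Q$; this single lemma carries essentially all the content of the theorem, and your one-sentence description of the gluing (``$n-1$ of the partitions can be chosen to match across $P\cap Q$, \dots leaving one coordinate free'') is a hope rather than an argument, and it does not describe the mechanism that actually works. In the equivalent formulation via essential maps $f\colon M\to I^n$, inessentiality of $f|_P$ and $f|_Q$ gives maps $g_P\colon P\to\partial I^n$ and $g_Q\colon Q\to\partial I^n$ agreeing with $f$ on the preimage of $\partial I^n$; the role of $\dim(P\cap Q)\le n-2$ is that any two maps of an at most $(n-2)$-dimensional compactum into $S^{n-1}$ that agree on a closed subset are homotopic rel that subset (because maps into $S^{n-1}$ extend over spaces of dimension at most $n-1$), so by the homotopy extension theorem $g_Q$ can be corrected to agree with $g_P$ on $P\cap Q$, and the two witnesses glue to show $f$ is inessential --- contradicting essentiality on $M$. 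Until that lemma (or its partition-language equivalent) is actually proved, your proposal is an outline of the known proof rather than a proof.
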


We prove two asymptotic analogues of this theorem, and we give some trivial examples of groups that cannot be asymptotic Cantor manifolds. In particular we prove:

\begin{thm} \label{Coarse.5}
Every finitely generated group of asymptotic dimension $n$, where $n \geq 2$, contains an asymptotic Cantor manifold. 
\end{thm}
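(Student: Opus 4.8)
The plan is to transport Aleksandrov's classical argument into the coarse setting, using Theorem~\ref{Coarse.3} as the engine that supplies separating subsets of strictly smaller asymptotic dimension. Fix a finite generating set and regard $G$ as its Cayley graph, a connected geodesic metric space with $\asdim G = n \ge 2$, so that $n-2 \ge 0$. If $G$ cannot be coarsely separated by any subset of asymptotic dimension at most $n-2$, then $G$ is itself a connected space of asymptotic dimension $n$ admitting no such separation, hence an $n$-asymptotic Cantor manifold, and we are done. The substance of the theorem is therefore to treat the case where low-dimensional coarse separations do exist, and to descend to a subspace in which they no longer do.

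First I would isolate the key dimensional step: if a connected subspace $Z \subseteq G$ with $\asdim Z = n$ is coarsely separated by a set $T$ with $\asdim T \le n-2$, then at least one connected deep component of $Z \setminus N_R(T)$ again has asymptotic dimension $n$. Here the asymptotic union theorems enter. Writing $Z = N_R(T) \cup \bigcup_i C_i$, where the $C_i$ are the deep and the remaining shallow components of the complement, the shallow part is absorbed into a neighbourhood of $T$, while the deep components are pairwise separated by $N_R(T)$ and so form a uniformly coarsely disjoint family; the (uniform) union theorem for asymptotic dimension (see \cite{BD08}) then gives $\asdim Z \le \max\{\asdim N_R(T),\ \sup_i \asdim C_i\}$. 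Since $\asdim N_R(T) = \asdim T \le n-2 < n = \asdim Z$, the supremum must equal $n$, and because asymptotic dimension is integer-valued and uniformly bounded by $n$ along the family, some single deep component $C_{i_0}$ satisfies $\asdim C_{i_0} = n$. This $C_{i_0}$ is connected by construction, so it is again an admissible candidate to which the step applies.

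Iterating this step produces a nested sequence $G = Z_0 \supseteq Z_1 \supseteq Z_2 \supseteq \cdots$ of connected subspaces, each of asymptotic dimension $n$, where $Z_{k+1}$ is a deep component obtained after cutting $Z_k$ by a set of dimension $\le n-2$; the process halts exactly when some $Z_k$ admits no further such separation, at which point $Z_k$ is the desired asymptotic Cantor manifold. The main obstacle is precisely that this process need not terminate after finitely many steps, and one must rule out a \emph{dimension collapse} in the limit: in the topological proof this is handled by the Brouwer reduction theorem together with compactness, neither of which is available coarsely. I would resolve it by organizing the construction as a minimality argument in the poset of connected asymptotic-dimension-$n$ subspaces ordered by reverse coarse inclusion, the crux being to show that a descending chain admits a lower bound of asymptotic dimension $n$. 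To secure this one chooses the separators progressively farther out, so that the $Z_k$ are nested with uniformly thick ``necks'' exhausting a fixed coarse direction; then the coarse intersection $\bigcap_k Z_k$ retains a copy of an $n$-dimensional model and remains connected, providing the lower bound. By the dimensional step above, a minimal element of the resulting chain can carry no coarse separation by a subset of asymptotic dimension $\le n-2$, so it is an $n$-asymptotic Cantor manifold contained in $G$. I expect the delicate point of the whole argument to be this last verification, namely that the limiting subspace does not drop below asymptotic dimension $n$.
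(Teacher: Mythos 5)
Your overall strategy --- iterate low-dimensional coarse separations, descend into a full-dimensional connected deep component at each stage, and pass to a minimal or limiting subspace --- is the natural coarse transcription of Aleksandrov's argument, but the step you yourself flag as delicate is exactly where the proof breaks down, and you supply no argument for it. There is no reason a descending chain $Z_0 \supseteq Z_1 \supseteq \cdots$ of connected subspaces with $\asdim Z_k = n$ should admit a lower bound of asymptotic dimension $n$: the deep component you pass to at each stage is dictated by whatever separation happens to exist, not one you design, so you cannot in general arrange ``uniformly thick necks exhausting a fixed coarse direction,'' and the coarse intersection $\bigcap_k Z_k$ can perfectly well be empty or of strictly smaller dimension. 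The sentence asserting that the intersection ``retains a copy of an $n$-dimensional model'' is an assertion of precisely the thing that must be proved; in the topological setting this is where the Brouwer reduction theorem and compactness do the work, and, as you note, neither is available coarsely --- but you replace them with nothing. (A secondary issue: in your dimensional step the infinite union theorem requires the deep components $C_i$ to satisfy $\asdim C_i \le n-1$ \emph{uniformly} before you may conclude $\asdim Z \le n-1$; mere failure of that bound for the family, when there are infinitely many deep components, does not hand you a single component of dimension exactly $n$.)

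The paper sidesteps all of this with a direct construction rather than an exhaustion argument. It takes an infinite geodesic $\gamma$ in the Cayley graph, places balls $B_k$ of radius $k$ centred at points $x_k \in \gamma$ with $d(x_1,x_k)=2^k$, and sets $B=(\bigcup_k B_k)\cup\gamma$. Point-transitivity of the Cayley graph allows the coverings of the $B_k$ to be translated to a common centre and assembled into a covering of a net of $G$, which forces $\asdim B = n$; and $B$ cannot be coarsely separated at all (let alone by an $(n-2)$-dimensional subset), because any two connected deep components of the complement of a neighbourhood of a separator must each eventually contain the geodesic segments joining consecutive balls, hence must intersect. To salvage your route you would need a genuinely new idea for the limit step; absent that, an explicit non-separable full-dimensional subspace, as in the paper, is the way through.
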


In fact, we prove a more general result in the section 4.\\

The paper is organized as follows. In section 2, we start with some basic definitions, we develop some important techniques, and we prove theorem \ref{Coarse.2}. Section 3 contains the proof of theorem \ref{Coarse.1}. In section 4, we prove two versions of Aleksandrov's theorem. In the last section, we state some open problems.
\\

\textbf{Acknowledgments:} I would like to thank Panos Papasoglu for his valuable advice during the development of this research work. I would also like to offer my special thanks to David Hume and Dawid Kielak.

\section{Weak Coarse Separation of Spaces.}
The asymptotic dimension $asdimX$ of a metric space $X$ is defined as follows: $asdimX \leq n$ if and only if for every $R > 0$ there exists a uniformly bounded covering $\mathcal{U}$ of $X$ such that the R-multiplicity of $\mathcal{U}$ is smaller than or equal to $n+1$ (i.e. every R-ball in $X$ intersects at most $n+1$ elements of $\mathcal{U}$).\\
There are many equivalent ways to define the asymptotic dimension of a metric space. 
%It turns out that the asymptotic dimension of an infinite tree is $1$ and the asymptotic dimension of $\mathbb{E}^{n}$ is $n$.

Let $X$ be a metric space and $\mathcal{U}$ a covering of $X$, we  denote by $mesh(\mathcal{U})= sup \lbrace diam(U) \mid U \in \mathcal{U} \rbrace $. We say that the covering is \textit{$D$-uniformly bounded} (or $D$-bounded) if $mesh(\mathcal{U}) \leq D$. For $G=\langle S \rangle$ a finitely generated group we define by $S_{d}$ (or $\partial B_{d}(1_{G})$) the \textit{boundary} of $B_{d}(1_{G})$ in $Cay(G,S)$, namely, $S_{d}=  \lbrace g \in  G \mid \parallel\!g\!\parallel =d \rbrace$.

The following theorem shows that every finitely generated group $G$ can be weakly coarsely separated by a subset $Y$ of asymptotic dimension strictly less than $asdimG$.

%\subsection{The Theorem}  
\newtheorem{theorem}{Theorem}
\begin{thm}\label{Coasre.thm6.4}
Let $G$ be a finitely generated group with asdim$(G)=n$, $n \geq 1$. Then there exists a set $\Gamma$ which has asymptotic dimension strictly less than $n$, and weakly coarsely separates $G$. 
\end{thm}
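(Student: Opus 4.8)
\textit{Proof plan.} Since \(\asdim G = n \ge 1\), the group \(G\) is infinite, so its Cayley graph \(X = Cay(G,S)\) is an infinite, connected, locally finite geodesic space. The plan is to isolate two deep subsets that recede from one another, to separate them by a wall of asymptotic dimension \(\le n-1\) coming from the \((n+1)\)-fold colourings provided by \(\asdim G \le n\), and finally to check that this wall weakly coarsely separates \(X\). First I would produce a bi-infinite geodesic \(\gamma\colon \mathbb{Z} \to X\) with \(\gamma(0)=1_G\): taking geodesic segments of length \(2m\) and translating their midpoints to \(1_G\) (using vertex-transitivity), a diagonal/K\"onig compactness argument on the locally finite graph yields such a \(\gamma\) in the limit. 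Let \(A = N_1(\gamma([0,\infty)))\) and \(B = N_1(\gamma((-\infty,0]))\). The defining property of a geodesic forces \(d(\gamma(k),\gamma(-k)) = 2k\), so the deep tails of \(A\) and \(B\) recede linearly; this is exactly what will later make any separating wall unbounded and guarantee two distinct deep complementary components.

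Second --- and this is where the drop in asymptotic dimension is produced --- I would use the equivalent formulation of \(\asdim G \le n\): for every scale \(R\) there are \(n+1\) uniformly bounded, \(R\)-disjoint families \(\mathcal{U}^0,\dots,\mathcal{U}^n\) whose union covers \(G\). These assemble into a Lipschitz, uniformly cobounded map \(p_R\) from \(X\) to (the geometric realization of) the nerve \(K_R\), an \(n\)-dimensional uniform simplicial complex. In \(K_R\) the images of \(A\) and \(B\) lie far apart, and an \(n\)-complex admits a subcomplex \(L_R\) of dimension \(\le n-1\) separating them (a generic PL level set placed between \(p_R(A)\) and \(p_R(B)\)). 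Pulling back, \(\Gamma_R := p_R^{-1}(N_\delta(L_R))\) is a codimension-one wall in \(X\) lying between \(A\) and \(B\).

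Third, I would bound the asymptotic dimension of the wall. At the fixed scale \(R\), the restriction \(p_R|_{\Gamma_R}\) is Lipschitz and uniformly cobounded into \(N_\delta(L_R)\), a set of asymptotic dimension \(\le \dim L_R \le n-1\), so \(\asdim \Gamma_R \le n-1\) \emph{at that scale}. The main obstacle is precisely that the nerve map is scale-dependent, whereas the inequality \(\asdim \Gamma \le n-1\) must hold at \emph{all} scales for a single set \(\Gamma\). I would therefore not argue through one nerve, but instead isolate and prove once a coarse partition lemma as the technical core: in any space with \(\asdim \le n\), two coarsely separated subsets can be separated by a subset of \(\asdim \le n-1\). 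The mechanism is that the transition region inherits an \(n\)-fold (rather than \((n+1)\)-fold) colouring at every scale --- one colour is consumed in passing from the \(A\)-side to the \(B\)-side --- which is exactly the defining covering condition for \(\asdim \le n-1\). Carrying this colouring argument out uniformly over all scales is the step I expect to be genuinely delicate.

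Finally, with such a wall \(\Gamma\) separating \(A\) from \(B\) in hand, I would choose \(R\) and verify weak coarse separation directly: the component of \(X \setminus N_R(\Gamma)\) containing the deep tail of \(A\) and the one containing the deep tail of \(B\) are distinct, since \(\Gamma\) separates \(A\) from \(B\); and each is deep, because the linear recession of the geodesic forces these tails to contain points arbitrarily far from \(\Gamma\), so neither component sits in any \(N_{R'}(\Gamma)\). At least one of the two is a genuine connected component, which is all that weak coarse separation requires, and the asymptotic dimension bound from the previous step completes the argument.
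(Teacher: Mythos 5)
Your proposal correctly identifies the mechanism that makes the dimension drop possible --- in the transition region one colour of an $(n+1)$-coloured cover is ``consumed'', leaving an $n$-fold cover at that scale --- and this is indeed the engine of the paper's argument (Lemma \ref{Coasre.lem6.5}, where the \emph{descendant} $U^{\ast}$ of each cover element meeting a sphere consists of its first-hit points along paths from $1_G$, and any ball meeting $n+1$ descendants is forced to meet $n+2$ elements of the original cover). But you have left the genuinely hard step as an acknowledged gap: a single-scale argument produces, for each $R$, a separating set $\Gamma_R$ admitting an $R$-scale cover of multiplicity $n$, and these sets vary with $R$; the theorem requires one set $\Gamma$ with such covers at \emph{every} scale. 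Your ``coarse partition lemma'' as stated is essentially the theorem itself, and you explicitly defer the uniform-over-all-scales argument as ``genuinely delicate'' without supplying it. That is the gap.

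The paper closes it with two ideas you would still need. First, an iteration (Lemma \ref{Coasre.lem6.6}): once a bounded separating set $E$ has been produced at scale $R_n$, the descendant construction can be re-run with $E$ in place of the sphere, using a cover adapted to the smaller scale $R_{n-1}$; repeating this $n$ times yields a single shell $\Gamma_0^{n}$ admitting multiplicity-$n$ covers at all of the finitely many scales $R_1,\dots,R_n$ simultaneously. Second, these shells are translated to points $x_n$ receding along a bi-infinite geodesic with gaps growing so fast that, for any fixed $R$, only finitely many shells lack an $R$-scale multiplicity-$n$ cover; those finitely many are absorbed into one bounded set $V_0$, and the remaining pieces are pairwise too far apart for an $R$-ball to meet two of them, so the union $\Gamma=\bigcup_k \Gamma_k$ satisfies $\asdim\Gamma\le n-1$ while each shell still bounds a deep component containing a large ball about $x_n$. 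Without some version of this multi-scale bookkeeping (or a genuinely different limiting argument), your outline does not yet prove the theorem. A secondary issue: in your second paragraph, the pullback $p_R^{-1}(N_\delta(L_R))$ of a low-dimensional subcomplex under a Lipschitz, uniformly cobounded map need not have small asymptotic dimension even at scale $R$; it is the colouring argument of your third paragraph, not the nerve construction, that actually controls the wall, so the nerve detour should be dropped rather than repaired.
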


We need two auxiliary lemmas.

\newtheorem{lemma}[theorem]{Lemma}

\begin{lem}\label{Coasre.lem6.5}
Let $G=\langle S \rangle$ be a finitely generated group with asdim$(G)=n$. Then for every $R \in \mathbb{N}\setminus{0}$ there exist $D_R$,$d_{R} > 0$ such that for every $d \geq d_{R}$ there exists a $D_{R}$-uniformly bounded family  $\overline{\mathcal{U}}_{d}$, such that:\\
(i)  $(R-1)$-mult($\overline{\mathcal{U}}_{d}) \leq n$.\\
(ii) The union $\Gamma_{R,d}=\bigcup \overline{\mathcal{U}}_{d}$  separates the group G into at least two connected components, one of which is unbounded and one of which is bounded.\\
(iii) One of the bounded components contains the ball with center $1_{G}$ and radius $\frac{d}{2}$ of $Cay(G,S)$ denoted by $B_{\frac{d}{2}}(1_{G})$.
\begin{proof}
 For $R>0$ there exists a uniformly bounded cover $\overline{\mathcal{U}}$ of G with
$R$-mult($\overline{\mathcal{U}}$) $=n+1$  and mesh($\overline{\mathcal{U}}$)$=D_{R}$ ($D_{R}>10$).\\
 We choose $d > d_{R} \geq 10^{9}(R+D_R +1)$. Let $X=Cay(G,S)$.\\
 \\
% Then there exist subfamily of $\overline{\mathcal{U}}$, denoted by $\overline{\mathcal{U}}_{S_{d}}$ that covers the sphere $S_{d}$.\\
We define $\overline{\mathcal{U}}_{S_{d}}= \lbrace U \in \overline{\mathcal{U}} \mid U \cap S_{d} \neq \varnothing \rbrace$
Let $U  \in \overline{\mathcal{U}}_{S_{d}}$ then we denote by

\begin{center} 
$U^{\ast} = \lbrace g\in U \cap G  $ such that there exists a path $ p:[0,l] \longrightarrow X $      from $ 1_{G} $ to $ g $ and $g$ is the first element of the path in $\bigcup \overline{\mathcal{U}}_{S_{d}} \rbrace$
\end{center}

\underline{Remark:} We may assume that the path $ p:[0,l] \longrightarrow X $ is simple. Obviously, we can omit loops in X.\\
\\
We say that $U^\ast$ is the \emph{descendant} of $U$.
%meets at most one time each element of the ball $B_{\Vert p(l) \Vert}(p(0))$.
Let: \begin{equation}
\overline{\mathcal{U}}_{d}= \lbrace U^{\ast} \mid U \in \overline{\mathcal{U}}_{S_{d}} \rbrace.
\end{equation}
We observe that mesh($\overline{\mathcal{U}}_{d})\leq D_{R}$. \\
\begin{figure}
\centering
\begin{subfigure}{.5\textwidth}
  \centering
  \includegraphics[width=.99\linewidth]{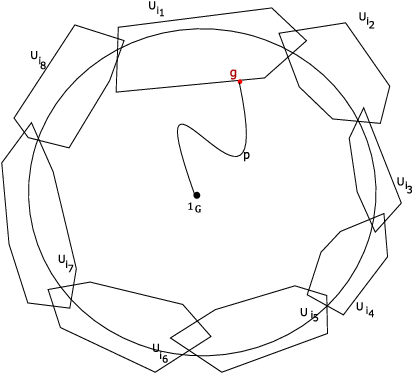}
  \caption{$\overline{\mathcal{U}}_{S_d }$}
  \label{fig:sub1}
\end{subfigure}%
\begin{subfigure}{.5\textwidth}
  \centering
  \includegraphics[width=.99\linewidth]{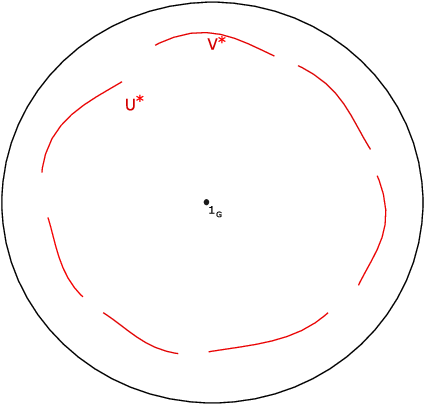}
  \caption{Here we have the union $\bigcup \overline{\mathcal{U}}_{d}$ }
  \label{fig:sub2}
\end{subfigure}
\caption{}
\label{fig:test}
\end{figure}

\textbf{Claim 1}: $X \setminus \bigcup \overline{\mathcal{U}}_{d}$ has at least two connected components, one of which is bounded and one of which is unbounded. The bounded component contains the ball $B_{d-(D_{R}+2)}(1_{G}) $.\\
Proof: We remark that $\bigcup \overline{\mathcal{U}}_{d} \subseteq B_{d+D_{R}}$ where $ B_{d+D_{R}}= B_{d+D_{R}}(1_{G})$, so $X \setminus B_{d+D_{R}}$ is a subset of $X \setminus \bigcup \overline{\mathcal{U}}_{d}$. Now it is easy to see that $X \setminus \bigcup \overline{\mathcal{U}}_{d}$ contains $B_{d-(D_{R}+2)}(1_{G}) $ and at least one unbounded connected subset $C$ of $X$.\\
We show that  $B_{d-(D_{R}+2)}(1_{G}) $ and  $C$ are disconnected in $X \setminus \bigcup \overline{\mathcal{U}}_{d}$. It suffices to show that every path $p:[0,l] \longrightarrow X$  from $1_{G}$ to $c \in C$ meets $\bigcup \overline{\mathcal{U}}_{d}$. \\
The path $p$ meets the sphere $S_{d}$, so there exists $U \in \overline{\mathcal{U}}_{S_{d}}$ in which the path $p$ has his first element in $\bigcup \overline{\mathcal{U}}_{S_{d}}$. Let's denote that element by $p_{0} \in U.$ Obviously $p_{0} \in U^{\ast}.$

% at least one unbounded connected subset of $X \diagdown \bigcup \overline{\mathcal{U}}_{d}}$. Lets denote by 
%\begin{center}
 %$C(\infty)= X \diagdown B_{d+D_{R}}}$
%\end{center}\\

%It is obvious that $B_{\frac{d}{2}}(1_{G}) \subseteq B_{d-(D_{R}+2)}}(1_{G}) \subseteq \overline{\mathcal{U}}_{d}}$ so to complete the proof of claim 1 it is enough to show that there is no path from $1_{G}$ to any element of $C(\infty).$\\
%Let $1_{G} \in B_{d-(D_{R}+2)}}$ , $c_{0} \in C(\infty)$ and $p:[0,l] \longrightarrow X$ to be a path from $1_{G}$ to $c_{0}$. The path $p$ goes through the sphere $S_{d}$, for this reason there exist $\mathcal{U} \in \overline{\mathcal{U}}_{S_{d}}$ in which the path $p$ has his first element in $\bigcup \overline{\mathcal{U}}_{S_{d}}$, let's denote that element by $p_{0}$.\\

\begin{figure}[h]
\begin{center}
\includegraphics{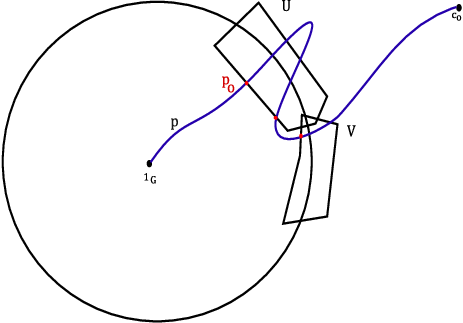}
\end{center}
%\vspace{0.1in}
 \caption{For $\mathcal{U},V \in \overline{\mathcal{U}}_{S_{d}}$}

\end{figure}

%By the definition of $\mathcal{U}^{\ast}$ we have that $p_{0} \in \mathcal{U}^{\ast}$ and the path $p$ goes through  $\bigcup \overline{\mathcal{U}}_{d}$, that proves the claim 1.\\

\textbf{Claim 2}: $(R-1)$-$mult(\overline{\mathcal{U}}_{d}) \leq n$.\\
Proof: Suppose that claim 2 does not hold. Then there exist $U^{\ast}_{1}$ , $ \ldots $ , $U^{\ast}_{n+1}$ elements in $\overline{\mathcal{U}}_{d}$ and a ball $B_{R-1}(x)$ which intersects all these elements.\\
If $u \in U^{\ast}_{1}$ then by definition there exists a path $\gamma:[0,l]\longrightarrow X$ from the identity to $u$ and its first element into $\bigcup \overline{\mathcal{U}}_{S_{d}}$ is $u$. Let's denote by $w$ the  element $\gamma(l-1) \in G$.\\
Then we have that $w \notin \bigcup \overline{\mathcal{U}}_{S_{d}}$ and $d_{S}(v,w)=1$, so
\begin{equation}
w \in B_{R}(x).
\end{equation}

\begin{figure}[h]
\begin{center}
\includegraphics{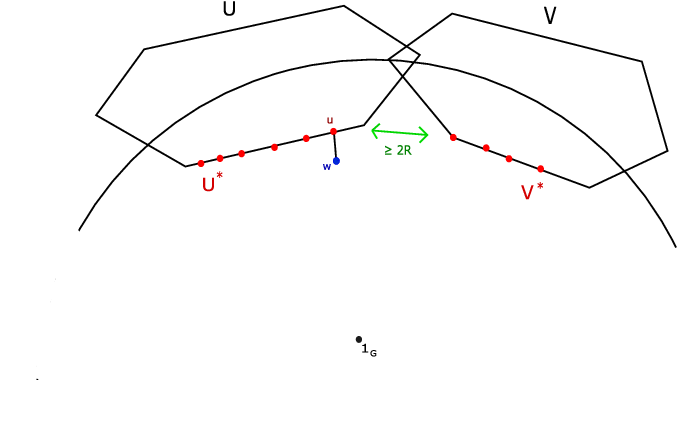}
\end{center}
%\vspace{0.1in}
 \caption{}
\end{figure}

In addition to, there exists $V \in \overline{\mathcal{U}} \setminus \overline{\mathcal{U}}_{S_{d}} $ such that $w \in V$,  so the ball $B_{R}(x)$ intersects $n+2$ discrete elements of $\overline{\mathcal{U}}$. This is a contradiction.

These two claims together prove the statement of the lemma.

\end{proof}

\end{lem}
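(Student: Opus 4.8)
The plan is to exploit the definition of asymptotic dimension directly and to build the separating family out of the cover elements that meet a large sphere, then trim them cleverly to control the multiplicity. First I would fix $R$ and invoke $\asdim G = n$ to obtain a $D_R$-bounded cover $\overline{\mathcal{U}}$ of $X = Cay(G,S)$ with $R$-multiplicity at most $n+1$ and mesh $D_R$. I would then choose $d_R$ enormous compared with $R + D_R$ (the bound $d_R \geq 10^9(R+D_R+1)$ is comfortably sufficient) and, for each $d \geq d_R$, isolate the subfamily $\overline{\mathcal{U}}_{S_d}$ of those cover elements meeting the sphere $S_d$. Their union is a thickened sphere sitting inside the annulus $B_{d+D_R} \setminus B_{d-D_R}$, which already separates the inner ball from the unbounded part of $X$; the remaining difficulty is that its $R$-multiplicity is only bounded by $n+1$, whereas $(i)$ demands $n$.

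To gain the missing unit of multiplicity I would pass to \emph{descendants}: for each $U \in \overline{\mathcal{U}}_{S_d}$ I replace $U$ by the set $U^\ast$ of those points of $U$ that occur as the \emph{first} entry of some simple path from $1_G$ into the thickened sphere $\bigcup \overline{\mathcal{U}}_{S_d}$. This trimming can only shrink the sets, so the family $\overline{\mathcal{U}}_d = \{U^\ast\}$ stays $D_R$-bounded. For the separation properties $(ii)$ and $(iii)$ I would argue that any path from $1_G$ to a point of the unbounded region must cross $S_d$, hence must have a first entry point into $\bigcup \overline{\mathcal{U}}_{S_d}$, and that point lies in some $U^\ast$ by construction; therefore $\bigcup \overline{\mathcal{U}}_d$ disconnects $B_{d-(D_R+2)}(1_G)$ from the unbounded component. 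Since $d_R$ was taken much larger than $D_R$, the inequality $d/2 \leq d-(D_R+2)$ holds, so the inner bounded component contains $B_{d/2}(1_G)$, giving $(iii)$.

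The hard part is the multiplicity estimate $(i)$, and this is exactly where the descendant construction pays off. I would suppose for contradiction that some ball $B_{R-1}(x)$ meets $n+1$ distinct descendants $U_1^\ast, \dots, U_{n+1}^\ast$. Picking $u \in U_1^\ast \cap B_{R-1}(x)$, the defining simple path reaches $u$ through a predecessor $w$ at distance $1$; by the \emph{first entry} property $w$ lies outside $\bigcup \overline{\mathcal{U}}_{S_d}$, yet $w \in B_R(x)$. Because $\overline{\mathcal{U}}$ covers $X$, the point $w$ belongs to some $V \in \overline{\mathcal{U}}$, and $V \notin \overline{\mathcal{U}}_{S_d}$ precisely because $w$ avoids the thickened sphere. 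Then $B_R(x)$ would meet the $n+2$ distinct sets $U_1, \dots, U_{n+1}, V$ of the original cover $\overline{\mathcal{U}}$ (noting $U_i^\ast \subseteq U_i$), contradicting its $R$-multiplicity $\leq n+1$. I expect the only delicate bookkeeping to be checking that $V$ is genuinely distinct from all the $U_i$ — which is immediate since $V \notin \overline{\mathcal{U}}_{S_d}$ while each $U_i \in \overline{\mathcal{U}}_{S_d}$ — so once the descendant mechanism is in place the argument closes cleanly.
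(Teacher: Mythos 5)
Your proposal is correct and follows essentially the same route as the paper: extract the cover elements meeting the sphere $S_d$, pass to the ``first-entry'' descendants $U^\ast$, prove separation by the first-crossing argument, and gain the extra unit of multiplicity by locating the predecessor $w$ at distance $1$ outside the thickened sphere so that $B_R(x)$ meets $n+2$ elements of the original cover. Your write-up is in fact slightly more careful than the paper's on the bookkeeping (choosing $u$ in $U_1^\ast \cap B_{R-1}(x)$ and checking $V$ is distinct from the $U_i$), but the mechanism is identical.
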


%Now we have the last lemma,

\begin{lem}\label{Coasre.lem6.6}
Let $G=\langle S \rangle$ be a finitely generated group with asdim$(G)=n$. Let $R$, $D_R$, and $d \geq d_{R}$ be as the lemma \ref{Coasre.lem6.5}.\\
Assume $E$ is a bounded subset of  $Cay(G,S)$ which separates $Cay(G,S)$ into at least two connected components, one of which is bounded and one of which is unbounded. Assume further that one bounded connected component of $Cay(G,S)\setminus E$ contains the ball $B_{\frac{d}{2}}$ and 
\begin{equation}
E \subseteq B(\frac{3d}{2}) \setminus B(\frac{3d}{4}+1).
\end{equation}

Then for every $R^{\prime} \leq R$ there exists a $D_{R^{\prime}}$-uniformly bounded family $\overline{\mathcal{U}}_{R^{\prime}}$  such that:\\ 
(i) The $D_{R^{\prime}}$-neighbourhood of $E$ contains the union $\bigcup \overline{\mathcal{U}}_{R^{\prime}} .$\\
(ii) $(R^{\prime}-1)$-mult$(\overline{\mathcal{U}}_{R^{\prime}}) \leq n$. \\
(iii) The union $\bigcup \overline{\mathcal{U}}_{R^{\prime}} $ separates the $Cay(G,S)$ into at least two connected components, one of which is bounded and one of which is unbounded.\\
(iv) One of the bounded components contains the ball $B_{\frac{d}{2}}$.

\begin{proof}
The proof is exactly the same as the proof of the previous lemma. We do the same process by replacing the sphere $S_{d}$ by $E$. The only difference between them is the proof of claim 1, thus we will prove only the analogue of claim 1. We have the same symbolism as that of proof of lemma \ref{Coasre.lem6.5} except from $ \overline{\mathcal{U}}_{S_{d}}$, the analogue of $ \overline{\mathcal{U}}_{S_{d}}$ will be denoted by $\overline{\mathcal{U}}_{E}$. We also replace of $R$ by $R^{\prime}$. The family $ \overline{\mathcal{U}}_{d} $ consists of the \emph{descendants} of $\overline{\mathcal{U}}_{E}$. We can assume that $D_{R^{\prime}}=D_{R}=D$.\\

\textbf{Claim 1}: $X \setminus \bigcup \overline{\mathcal{U}}_{d}$ contains at least two connected components, one of which is bounded and one of which is unbounded. The bounded contains the ball $B_{\frac{d}{2}}$.

 Proof: We remark that  $\bigcup \overline{\mathcal{U}}_{d} \subseteq B(\frac{3d}{2}+D) \setminus B(\frac{3d}{4}-D)$, so $X \setminus \bigcup \overline{\mathcal{U}}_{d} $ contains the ball $B_{\frac{d}{2}}$ and at least one unbounded connected subset $C$ of $X$.\\
We have that the bounded set $E$ separates $Cay(G,S)$ into at least two connected components, one of which is bounded and one of which is unbounded. Thus, there exists an unbounded connected subset $C^{\prime}$ of $C$ such that $C^{\prime}$ is contained in an unbounded connected component $F$ of $X \setminus E$. 
%We have $C^{\prime} \subseteq F$.\\

We show that  $B_{\frac{d}{2}}$ and  $C^{\prime}$ are  disconnected in $X \setminus \bigcup \overline{\mathcal{U}}_{d}$. It suffices to show that every path $p:[0,l] \longrightarrow X$  from $1_{G}$ to $c \in C^{\prime}$ meets $\bigcup \overline{\mathcal{U}}_{d}$. \\
Since the subset $E$ separates $B(\frac{d}{2})$ and $F$ the path $p$ meets $E$, there exists $U \in \overline{\mathcal{U}}_{E}$ in which the path $p$ has it's first element in $\bigcup \overline{\mathcal{U}}_{E}$. Let's denote that element by $p_{0} \in U.$ Obviously $p_{0} \in U^{\ast}.$ This proves claim 1.

\end{proof}

\end{lem}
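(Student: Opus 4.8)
The plan is to transplant the \emph{descendant} construction of Lemma \ref{Coasre.lem6.5} verbatim, replacing the sphere $S_d$ by the separating set $E$ throughout, and to redo only the separation claim, which is the single place where the hypothesis on $E$ (rather than on $S_d$) enters. Concretely, for the fixed $R$ I would take the $D$-bounded cover $\overline{\mathcal{U}}$ of $G$ supplied by $\asdim G=n$ with $R$-multiplicity at most $n+1$; since every $R'$-ball sits inside an $R$-ball, the same cover has $R'$-multiplicity at most $n+1$ for all $R'\leq R$, which lets me set $D_{R'}=D_R=D$ once and for all. I then put $\overline{\mathcal{U}}_{E}=\lbrace U\in\overline{\mathcal{U}}\mid U\cap E\neq\varnothing\rbrace$ and, for each $U\in\overline{\mathcal{U}}_{E}$, define its descendant $U^{\ast}$ to be the set of $g\in U\cap G$ that occur as the first vertex lying in $\bigcup\overline{\mathcal{U}}_{E}$ along some simple path from $1_{G}$ to $g$, exactly as before. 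Setting $\overline{\mathcal{U}}_{R'}=\lbrace U^{\ast}\mid U\in\overline{\mathcal{U}}_{E}\rbrace$, the purely local properties are immediate: descendants are subsets, so $\overline{\mathcal{U}}_{R'}$ is $D$-bounded; and if $U\in\overline{\mathcal{U}}_{E}$ meets $E$ then $\diam(U)\leq D$ forces $U\subseteq N_{D}(E)$, whence $\bigcup\overline{\mathcal{U}}_{R'}\subseteq N_{D}(E)$, giving (i).

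For (ii) I would reproduce the multiplicity-drop argument unchanged. Supposing a ball $B_{R'-1}(x)$ meets $n+1$ descendants $U_{1}^{\ast},\dots,U_{n+1}^{\ast}$, pick $u\in U_{1}^{\ast}\cap B_{R'-1}(x)$ and, using a simple path witnessing $u\in U_{1}^{\ast}$, let $w$ be the vertex immediately preceding $u$. By minimality of $u$ we have $w\notin\bigcup\overline{\mathcal{U}}_{E}$, so the element $V\in\overline{\mathcal{U}}$ covering $w$ lies outside $\overline{\mathcal{U}}_{E}$ and hence differs from every $U_{i}$; since $d_{S}(u,w)=1$ gives $w\in B_{R'}(x)$, the ball $B_{R'}(x)\subseteq B_{R}(x)$ would meet $n+2$ distinct members of $\overline{\mathcal{U}}$, contradicting $R$-multiplicity $\leq n+1$.

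The substantive work is (iii)--(iv), the analogue of Claim $1$, and here the separation hypothesis on $E$ replaces the role of the sphere. Write $\Gamma=\bigcup\overline{\mathcal{U}}_{R'}$. By (i) together with $E\subseteq B(\tfrac{3d}{2})\setminus B(\tfrac{3d}{4}+1)$ and $d\gg D$, the set $\Gamma$ lies in the annulus $B(\tfrac{3d}{2}+D)\setminus B(\tfrac{3d}{4}+1-D)$; in particular $\Gamma$ is bounded and disjoint from $B_{\frac{d}{2}}$, while $X\setminus B(\tfrac{3d}{2}+D)$ is an unbounded set disjoint from $\Gamma$, so $X\setminus\Gamma$ has an unbounded component. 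Let $C_{0}$ be the component of $X\setminus\Gamma$ containing $1_{G}$ and let $B$ be the bounded component of $X\setminus E$ containing $B_{\frac{d}{2}}$. The key observation is that every path $p$ from $1_{G}$ to a vertex $c$ lying in a component of $X\setminus E$ other than $B$ must meet $\Gamma$: crossing the vertex-separator $E$ forces $p$ to contain a vertex of $E$, hence a vertex of $\bigcup\overline{\mathcal{U}}_{E}$ (since every $g\in E\cap G$ is covered by some $U\in\overline{\mathcal{U}}$, which then lies in $\overline{\mathcal{U}}_{E}$), and truncating $p$ at its first such vertex $p_{0}\in U$ (deleting loops) exhibits $p_{0}\in U^{\ast}\subseteq\Gamma$. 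Applying this with $c$ in an unbounded component $F$ of $X\setminus E$ separates $B_{\frac{d}{2}}$ from $F$ inside $X\setminus\Gamma$, and the same observation shows $C_{0}\subseteq B$: were $C_{0}$ to contain a vertex $c\notin B$, a $\Gamma$-avoiding path from $1_{G}$ to $c$ would contradict the observation. Hence $C_{0}\subseteq B$ is bounded, which yields (iv) and the bounded component required in (iii).

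I expect the only delicate point to be the last one: justifying that a path leaving $B$ for another component of $X\setminus E$ genuinely forces a vertex of $\bigcup\overline{\mathcal{U}}_{E}$, and thus a descendant. This rests on treating $E$ as a vertex-separator, so that any combinatorial path leaving $B$ passes through a vertex of $E$, covered by some $U\in\overline{\mathcal{U}}_{E}$. Once this is pinned down, the first-entry-vertex argument is identical in spirit to that of Lemma \ref{Coasre.lem6.5} and closes both (iii) and (iv), while the bookkeeping about which balls $\Gamma$ avoids is routine given $d_{R}\geq 10^{9}(R+D_{R}+1)$.
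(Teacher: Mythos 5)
Your proposal is correct and follows essentially the same route as the paper: the same descendant construction with $S_d$ replaced by $E$, the same multiplicity-drop argument via the predecessor vertex $w$, and the same separation argument using the first entry point into $\bigcup\overline{\mathcal{U}}_{E}$ along a path crossing $E$. In fact you spell out more than the paper does (the containment $\bigcup\overline{\mathcal{U}}_{R'}\subseteq N_{D}(E)$, the reduction $D_{R'}=D_R$, and the boundedness of the component of $1_G$), and the vertex-versus-edge-point subtlety you flag is present in the paper's own argument as well, where it is harmless because in the intended application $E$ is a union of cover elements and hence a set of vertices.
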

\begin{flushleft}
\textbf{Convention:} When we have a star ($\ast$) as superscript to a set $U$, namely $U^\ast$, then $U$ is related to the proofs of the previous two lemmas and $U^\ast$ is a \emph{descendant} of $U$. 
\end{flushleft}

%We move to the proof of theorem \ref{Coasre.thm6.4}.
%\subsection{The proof of the theorem}
%Now lets move on the proof of the theorem but first we give a sketch of th proof.% \\
%Now we have lemma2 and lemma4% 
\begin{proof}(\emph{Theorem \ref{Coasre.thm6.4}})

We follow some steps in order to construct the wanted sequence of subsets $\lbrace \Gamma_{k}\rbrace_{k \in \mathbb{N}} $ of $G$. Let $S$ be a finite generating set of $G$.\\
\\
\textbf{CONSTRUCTION I:}\\
 
\textbf{Step 1}: We choose $R_{1} + 1>100$. By lemma \ref{Coasre.lem6.5} there exist $ d_{R_{1}} > 0$ and a $D_{R_{1}}$-uniformly bounded family $\overline{\mathcal{U}}_{d_{R_{1}}}$ with  $R_{1}$-mult$(\overline{\mathcal{U}}_{d_{R_{1}}})\leq n$.
We take $d_{R_{1}} \geq 10^{9}(R_{1}+2D_{R_{1}}+1)$. The family $\overline{\mathcal{U}}_{d_{R_{1}}}$ was obtained from a $D_{R_{1}}$-uniformly bounded covering $\overline{\mathcal{U}}_{1}$ of $G$. 
We set $\Gamma_{0}^{1}= \bigcup \overline{\mathcal{U}}_{d_{R_{1}}}.$\\

\textbf{Step 2}: For $R_{2}+1 >10^{9}(R_{1}+2D_{R_{1}} +1)$ we take 
\begin{center}
$d_{R_{2}} \geq 10^{9}(d_{R_{1}}+R_{2}+2(D_{R_{2}}+D_{R_{1}})+2)$.
\end{center} 
By lemma \ref{Coasre.lem6.5} there exists a $D_{R_{2}}$-uniformly bounded family $\overline{\mathcal{U}}_{d_{R_{2}}}^{1}$ with $R_{2}$-mult$(\overline{\mathcal{U}}_{d_{R_{2}}}^{1})\leq n$.  The family $\overline{\mathcal{U}}_{d_{R_{2}}}^{1}$ was obtained from a $D_{R_{2}}$-uniformly bounded covering $\overline{\mathcal{U}}_{2}$ of $G$. We set $\Gamma_{R_{2}}^{1}= \bigcup \overline{\mathcal{U}}_{d_{R_{2}}}^{1}.$
\\
The conditions of lemma \ref{Coasre.lem6.6} are satisfied for $\Gamma_{R_{2}}^{1}$. Applying the proof of that lemma using the cover $\overline{\mathcal{U}}_{1}$ we obtain a $D_{R_{1}}$-uniformly bounded family
%  $\overline{\mathcal{U}}^{2}$  of $\Gamma_{R_{2}}^{1}$ that give us a refinement
$\overline{\mathcal{U}}_{d_{R_{2}}}^{2}$ with $R_{1}$-mult$(\overline{\mathcal{U}}_{d_{R_{2}}}^{2})\leq n$. We set $\Gamma_{R_{2}}^{2}= \bigcup \overline{\mathcal{U}}_{d_{R_{2}}}^{2}.$\\

\begin{flushleft}
To every $U_{i}^{1 \ast}$ in $\overline{\mathcal{U}}_{d_{R_{2}}}^{1}$ we associate a family, 
\begin{equation}
U_{i}^{1 \ast} \longrightarrow \lbrace U_{(i,s)}^{2 \ast}  , s \in I_{i}^1\rbrace.
\end{equation}

This family consists of all $U_{(i,s)}^{2 \ast} \in \overline{\mathcal{U}}_{d_{R_{2}}}^{2}$ such that $ U_{(i,s)}^{2} \cap U_{i}^{1 \ast} \neq \emptyset.$ (The set $U_{(i,s)}^{2 \ast}$ is a descendant of $U_{(i,s)}^{2 }$, and $U_{(i,s)}^{2 }$ is obtained by procedures of the previous lemmas.)
\end{flushleft}
%$U_{i,s}^2 = \lbrace g \in U_{s}^{2 \ast} \mid $ for $ U_{s}^{2 \ast} \in \overline{\mathcal{U}}_{d_{R_{2}}}^{2} $ and $ U_{s}^{2} \cap U_{i}^{1 \ast} \neq \emptyset \rbrace.$
We observe that:\\
(i) $\bigcup \overline{\mathcal{U}}_{d_{R_{2}}}^{2} = \bigcup \lbrace U_{(i,s)}^{2 \ast} \forall i , s \rbrace$\\
(ii) $R_{1}$-mult$(\lbrace U_{(i,s)}^{2 \ast} \forall i , s \rbrace)\leq n$\\
(iii) For $U_{i_{1}}^{1 \ast} $ , $\ldots $  , $  U_{i_{n+1}}^{1 \ast} \in \overline{\mathcal{U}}_{d_{R_{2}}}^{1}$ the sets $ \bigcup \lbrace U_{(i_{1},s)}^{2 \ast}  , s \in I_{i_{1}}^2 \rbrace$ , $\ldots $  , $ \bigcup \lbrace U_{(i_{n+1},s)}^{2 \ast}  , s \in I_{i_{n+1}}^2 \rbrace$ can not be intersected by a $(R_{2}-D_{R_{1}})$-ball.\\
(iv) We set $ \overline{\mathcal{U}}_{2}^{2} = \lbrace U_{(i,s)}^{2 \ast} \forall i , s \rbrace $, then
\begin{equation}
mesh(\overline{\mathcal{U}}_{2}^{2}) \leq D_{R_{2}} +2D_{R_{1}}.
\end{equation}
We set $\Gamma_{0}^{2}= \bigcup \overline{\mathcal{U}}_{2}^{2}$
.\\
.\\
.\\
.\\
\begin{figure}[h]
\begin{center}
\includegraphics{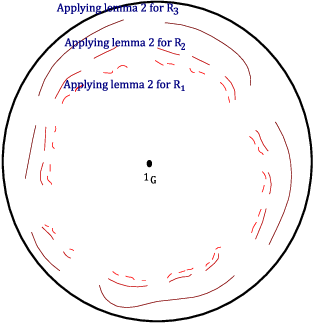}
\end{center}
\vspace{0.1in}
 \caption{Here we have an illustration of the result of step 3 for n=1.}
 \end{figure}
\textbf{Step n}: We take:
\begin{equation}
R_{n} \geq 10^{9}(R_{n-1}+2 \sum_{s=1}^{n-1}D_{R_{s}}+n )
\end{equation}
and
\begin{equation}
d_{R_{n}} \geq 10^{9}(d_{R_{n-1}}+R_{n}+2 \sum_{s=1}^{n-1}D_{R_{s}}+n).
\end{equation}
By lemma \ref{Coasre.lem6.5} there exists a $D_{R_{n}}$-uniformly bounded family $\overline{\mathcal{U}}_{d_{R_{n}}}^{1}$ with $R_{n}$-mult($\overline{\mathcal{U}}_{d_{R_{n}}}^{1}) \leq n$. We set $\Gamma_{R_{n}}^{1}= \bigcup \overline{\mathcal{U}}_{d_{R_{n}}}^{1}.$
\\
The conditions of lemma \ref{Coasre.lem6.6} are satisfied for $\Gamma_{R_{n}}^{1}$. Applying the proof of that lemma using the cover $\overline{\mathcal{U}}_{n-1}$ we obtain a $D_{R_{n-1}}$-uniformly bounded family
%covering  $\overline{\mathcal{U}}^{2}$  of $\Gamma_{R_{n}}^{1}$ that give us a refinement 
$\overline{\mathcal{U}}_{d_{R_{n}}}^{2}$ with $R_{n-1}$-mult($\overline{\mathcal{U}}_{d_{R_{n}}}^{2}) \leq n$. By $\overline{\mathcal{U}}_{n}^{2}$ we denote the analogue of $ \overline{\mathcal{U}}_{2}^{2}$ of the step 2.  We set $\Gamma_{R_{n}}^{2}= \bigcup \overline{\mathcal{U}}_{n}^{2}$.

We continue by applying the lemma \ref{Coasre.lem6.6} (n-2) more times and we end up having a $D_{R_{1}}$-uniformly bounded family such that $R_{1}$-mult($\overline{\mathcal{U}}^{n}_{d_{R_{n}}}) \leq n$.
\\
%\textbf{(i)} A set $\Gamma_{R_{n}}^{n}$\\
%\textbf{(i)} A $D_{R_{n}}$-uniformly bounded family 
%covering of $\Gamma_{R_{n}}^{n}$, $\overline{\mathcal{U}}_{n}^{n}$ 
%with $R_{1}$-mult($\overline{\mathcal{U}}_{n}^{n}) \leq n$.\\
%\textbf{(ii)} 
For every $k \in  \lbrace 1,\ldots ,n \rbrace$ and every fixed $ (i_{1},\ldots, i_{k}) $ we set 
\begin{center}
$U^{n, (i_{1},\ldots, i_{k}) }$ =  $\bigcup_{(i_{k+1},\ldots ,i_{n})}\lbrace U_{(i_{1},\ldots, i_{n})}^{n \ast},  $ and $ i^{}_{s} \in I_{(i_{1},\ldots, i_{s-1})}^n $ for every $ s \in \lbrace k+1,\ldots n \rbrace  \rbrace$.\\
\end{center}
We set $D_s = D_{R_{s}}$. We define the families:
\begin{center}
$\mathcal{V}^{n}_{k} = \lbrace U^{n,(i_{1},\ldots ,i_{k}) }  $ for every possible $ (i_{1},\ldots ,i_{k}) \rbrace$
\end{center}
Observe that,
\begin{center}
$mesh(\mathcal{V}^{n}_{k}) \leq D_{n-k+1} + 2\sum_{m=1}^{n-k}D_{m}. $
\end{center}
and 
\begin{center}
$(R_{n-k+1}- \sum_{m=1}^{n-k} D_{m})$-mult($\mathcal{V}^{n}_{k}) \leq n. $
\end{center}
We set $\Gamma_{0}^{n}= \bigcup \overline{\mathcal{U}}_{n}^{n}$, where $\overline{\mathcal{U}}_{n}^{n }= \lbrace U^{n \ast}_{(i_{1},\ldots ,i_{n}) }  $ for all $ (i_{1},\ldots ,i_{n}) \rbrace$. Here the step n ends.\\

Using the previous steps we constructed a sequence:
 \begin{equation}
 \lbrace \Gamma_{0}^{n}= \bigcup \overline{\mathcal{U}}_{n}^{n} \rbrace_{n \in \mathbb{N}},
 \end{equation}
and for every step n and every $k \in  \lbrace 1,\ldots ,n \rbrace$ the families:
\begin{center}
$\mathcal{V}^{n}_{k} = \lbrace U^{n,(i_{1},\ldots ,i_{k}) }  $ for every possible $ (i_{1},\ldots ,i_{k}) \rbrace$.
\end{center}
\textbf{CONSTRUCTION II:}\\

Let $c: \mathbb{R} \rightarrow Cay(G,S)$ be a bi-infinite geodesic such that $c(0)=1_{G}$. We choose a sequence of elements of G onto the geodesic $c$. The first element is $x_{1}=1_{G}$, the second element is $x_{2}=c(10^{9}(d_{R_{1}}+d_{R_{2}} + 2))$, so $d_{S}(x_{1},x_{2})=10^{9}(d_{R_{1}}+d_{R_{2}}+2)$. The n-nth element is:\\
\begin{equation}
x_{n}=c(10^{9}(n + d_{R_{n}} +  d_{R_{n-1}} +  d_{S}(x_{1},x_{n-1})).
\end{equation}\\
and 
$d_{S}(x_{n},x_{n-1})=10^{9}(n + d_{R_{n}}+ d_{R_{n-1}} + d_{S}(x_{1},x_{n-1}))$.\\
We denote by $\Gamma_{n}$ 
%the analogue of  $\Gamma^{n}_{0}$  if we do the same construction with replacement of $d_{R_{n}}$-ball by the ball %$B_{G}(x_{n},d_{R_{n}})$. Namely 
the transportation of $\Gamma^{n}_{0}$ by $x_{n}$.

\begin{equation}
\Gamma_{n}= x_{n} \cdot \Gamma^{n}_{0}
\end{equation}
 Let $\Gamma= \bigcup_{k=1}^{\infty}\Gamma_{k}$. Obviously, $\Gamma$ weakly coarsely separates $Cay(G,S)$. It remains to show the following, 
%For the final part of the proof we have the following claim.\\
\\
\textbf{Claim:}  asdim$\Gamma \leq n-1$.\\
\begin{figure}[h]
\begin{center} 
\includegraphics{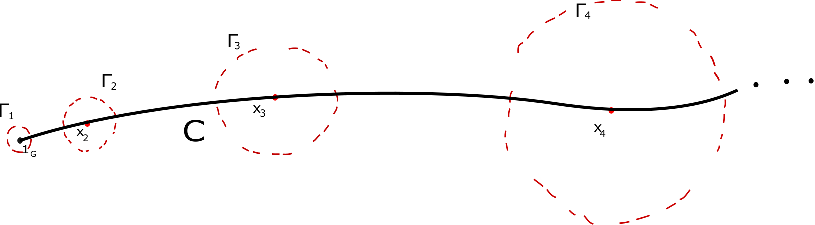}
\end{center}
%\vspace{0.1in}
 \caption{This is the picture we have.}

\end{figure}
Proof: Let $R>R_{1}$, then there exists a natural number $n_{0}$ such that $ n_{0}-1  > 10^{9}R.$

We observe that for every $k \in  \lbrace 1,\ldots ,n \rbrace$ each family $x_{n} \cdot \mathcal{V}^{n}_{k}$ is a $ (D_{n-k+1} +2\sum_{m=1}^{n-k}D_{m})$-uniformly bounded covering of $\Gamma_{n}$ and 
\begin{center}
 $(R_{n-k+1}- \sum_{m=1}^{n-k} D_{m})$-mult$(x_{n} \cdot \mathcal{V}^{n}_{k}) \leq n$.
\end{center} 
For every $n \geq 2 n_{0}+1$ we have that 
\begin{center}
$ R_{n-n_{0}}- \sum_{m=1}^{n-n_{0}} D_{m} > n_{0} > R$
\end{center}
We set $V_{0}= \bigcup_{n=1}^{2 n_{0}+1}\Gamma_{n}$. We will construct a uniformly bounded covering $\overline{\mathcal{V}}$ of $\Gamma$ with R-multiplicity less or equal to $n$. We consider the covering:
\begin{center}
$\overline{\mathcal{V}}=\lbrace V_{0} \rbrace \bigcup_{n=2 (n_{0}+1)}^{\infty} \lbrace x_{n} \cdot \mathcal{V}^{n}_{n- (2n_{0}+1)} \rbrace. $
\end{center}
Every element $V \in \overline{\mathcal{V}}$ has $diam(V)\leq max\lbrace diamV_{0} ,(D_{2 (n_{0}+1)} +2\sum_{m=1}^{2n_{0}+1}D_{m}) \rbrace$, so the cover $\overline{\mathcal{V}}$ is indeed uniformly bounded.

To complete the proof we need to show that every $R$-ball meets at most $n$ element of the covering.\\
\textbf{First}, we have that $V_{0}$ and $\overline{\mathcal{V}} \smallsetminus \lbrace V_{0} \rbrace$ are $(d_{S}(x_{2 (n_{0}+1)},x_{2 n_{0}+1})- (d_{R_{2 (n_{0}+1)}}+d_{R_{2 n_{0}+1}}))$-disjoint. Observe that:
\begin{center}
$ d_{S}(x_{2 (n_{0}+1)},x_{2 n_{0}+1})- (d_{R_{2 (n_{0}+1)}}+d_{R_{2 n_{0}+1}})  > n_{0} > 10^{9} R $,
\end{center}
 so there is no $R$-ball that meets both $V_{0}$ and an element of  $\overline{\mathcal{V}} \smallsetminus \lbrace V_{0} \rbrace$.\\
\textbf{Second}, let $2n_{0}+1<n_{1}<n_{2}$. If $x_{n_{1}} \cdot V^{n,(i_{1},\ldots ,i_{2n_{0}+1}) } \in x_{n_{1}} \cdot \mathcal{V}^{n_{1}}_{n_{1}-(2n_{0}+1)}$ and $x_{n_{2}} \cdot V^{n,(i_{1},\ldots ,i_{2n_{0}+1}) } \in x_{n_{2}} \cdot \mathcal{V}^{n_{2}}_{n_{2}-(2n_{0}+1)}$, then these two sets are $(d_{S}(x_{n_{1}},x_{n_{2}})-(d_{R_{n_{1}}}+d_{R_{n_{2}}}))-$disjoint. We have:\\
\begin{equation}
d_{S}(x_{n_{1}},x_{n_{2}}) - (d_{R_{n_{1}}}+d_{R_{n_{2}}})\geq d_{S}(x_{n_{2}-1},x_{n_{2}}) - (d_{R_{n_{2}-1}}+d_{R_{n_{2}}}) >  n_{2} > 10^{9}R,
\end{equation}
 so there is no $R$-ball that meets both those elements.\\
 %$V^{n,(i_{1},\ldots ,i_{n_{0}}) } \in x_{n_{1}} \cdot \mathcal{V}^{n_{1}}_{n_{1}-n_{0}}$ and $V^{n,(i_{1},\ldots ,i_{n_{0}}) } \in x_{n_{2}} \cdot \mathcal{V}^{n_{2}}_{n_{2}-n_{0}}$.\\
\textbf{Finally}, let $2n_{0}+1<n$. 
%We take $x_{n} \cdot \mathcal{V}^{n}_{n-n_{0}}$. 
We have that  $(R_{2(n_{0}+1)}-\sum_{m=1}^{2n_{0}+1}D_{m})$-mult($x_{n} \cdot \mathcal{V}^{n}_{n-(2n_{0}+1)}) \leq n$.
Since
\begin{equation}
R_{2(n_{0}+1)}-\sum_{m=1}^{2n_{0}+1}D_{m} > n_{0} >10^{9}R.
\end{equation}\\
we conclude that $R$-mult($x_{n} \cdot \mathcal{V}^{n}_{n-(2n_{0}+1)}) \leq n$.

\end{proof}

The proof of the next theorem is almost identical to that of theorem \ref{Coasre.thm6.4}.

\begin{thm}\label{Coarse.thm6.7}
Let $X$ be a geodesic metric space containing a bi-infinite geodesic. If $asdimX=n>0$, then there exists a subspace $Y$ of asymptotic dimension strictly less than $n$ which weakly coarsely separates $X$. 
\end{thm}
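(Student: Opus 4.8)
The plan is to transcribe the proof of Theorem \ref{Coasre.thm6.4} into the setting of a geodesic metric space, observing that the only features of the Cayley graph that genuinely intervened there were: (a) geodesicity, which $X$ enjoys by hypothesis; (b) the existence, for each $R$, of a uniformly bounded cover of $R$-multiplicity $n+1$, which is exactly the meaning of $\asdim X = n$; and (c) a vertex-transitive group action, used only in CONSTRUCTION II to transport the template $\Gamma^n_0$ to the points $x_n$ along the geodesic. Points (a) and (b) transfer verbatim, so the entire difficulty is to do without the homogeneity in (c).

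First I would fix a basepoint $o = c(0)$ on the given bi-infinite geodesic $c$ and replace the spheres $S_d = \partial B_d(1_G)$ everywhere by the metric spheres $S(p,d) = \{x \in X : d(x,p) = d\}$ about a movable centre $p$. With this substitution Lemmas \ref{Coasre.lem6.5} and \ref{Coasre.lem6.6} should hold word for word: the descendant $U^{\ast}$ of a cover element $U$ meeting $S(p,d)$ is defined through simple paths issuing from $p$, and the two claims in the proof of Lemma \ref{Coasre.lem6.5} use only that a path from $p$ to a deep point must cross $S(p,d)$ (continuity of $x \mapsto d(x,p)$ along a path in the geodesic space $X$) together with the purely local count of Claim 2, neither of which refers to the group; the annular hypothesis of Lemma \ref{Coasre.lem6.6} is simply read relative to the centre $p$. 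The point that makes homogeneity dispensable is that the fixed covers $\overline{\mathcal{U}}_j$ of $X$ supplying these families have mesh $D_{R_j}$ and $R_j$-multiplicity $n+1$ as \emph{intrinsic} properties of $X$, independent of which centre $p$ one intersects them against.

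With these lemmas in hand CONSTRUCTION I is unchanged: iterating the metric form of Lemma \ref{Coasre.lem6.6} $n$ times about a centre $p$ yields the families $\mathcal{V}^n_k$ with $mesh(\mathcal{V}^n_k) \le D_{n-k+1} + 2\sum_{m=1}^{n-k} D_m$ and $(R_{n-k+1} - \sum_{m=1}^{n-k} D_m)$-multiplicity $\le n$, together with a separating shell $\Gamma^n_0(p)$ about $p$. For CONSTRUCTION II, instead of translating a single $\Gamma^n_0$, I would re-run CONSTRUCTION I afresh with centre $x_n := c(t_n)$, the $t_n$ marching to $+\infty$ with the same enormous gaps $d(x_n, x_{n-1}) = 10^9(n + d_{R_n} + d_{R_{n-1}} + d(x_1,x_{n-1}))$, reusing the \emph{same} global covers $\overline{\mathcal{U}}_j$ and only moving the sphere to $S(x_n, d_{R_n})$. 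This produces a shell $\Gamma_n := \Gamma^n_0(x_n)$ of radius $\asymp d_{R_n}$ about $x_n$, carrying families $\mathcal{V}^n_k$ with identical mesh and multiplicity bounds. Setting $\Gamma = \bigcup_n \Gamma_n$, the weak coarse separation is then read off exactly as in Theorem \ref{Coasre.thm6.4}: the unbounded complement of the string of shells is one connected deep component, while the union $\bigcup_n B_{d_{R_n}/2}(x_n)$ of the enclosed balls is a second, disconnected, deep component, deep precisely because $d(x_n, \Gamma) \asymp d_{R_n}/2 \to \infty$.

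It then remains to bound $\asdim \Gamma$, and here the computation is identical to the one closing Theorem \ref{Coasre.thm6.4}: given $R$, choose $n_0$ with $n_0 - 1 > 10^9 R$, lump the finitely many early shells into a single bounded set $V_0 = \bigcup_{n=1}^{2n_0+1} \Gamma_n$, and cover each later shell $\Gamma_n$ by the family $\mathcal{V}^n_{n-(2n_0+1)}$ about $x_n$; the huge gaps make the shells pairwise $10^9 R$-separated, so no $R$-ball meets two of them, while on each single shell the chosen family has $R$-multiplicity $\le n$, whence $\asdim \Gamma \le n-1$. I expect the main obstacle to be the single point flagged above, namely verifying that re-centring CONSTRUCTION I at each $x_n$ reproduces the mesh and multiplicity estimates with constants independent of $n$ and of the basepoint; once the intrinsic, basepoint-free nature of the covers $\overline{\mathcal{U}}_j$ is made explicit, the separation, the disjointness of the shells, and the final multiplicity count all go through mechanically, as the author already indicates.
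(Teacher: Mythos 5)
Your proposal is correct and follows exactly the route the paper intends: the paper's entire proof of this theorem is the remark that it is ``almost identical'' to that of Theorem \ref{Coasre.thm6.4}, and you have spelled out precisely the adaptation required, namely replacing spheres about $1_G$ by metric spheres about basepoints $x_n$ on the geodesic and re-running CONSTRUCTION I at each $x_n$ in place of translating a single shell by the group action. The one point you flag --- that the covers $\overline{\mathcal{U}}_j$ and their mesh/multiplicity constants are intrinsic to $X$ and independent of the chosen centre --- is indeed the only place homogeneity entered, so the argument goes through as you describe.
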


The following example shows that theorem \ref{Coarse.thm6.7} cannot be generalized to any geodesic metric space.
\begin{exmp}\label{coarse.ex2} Let $L$ be the infinite line $[0, \infty)$ with the graph metric. The asymptotic dimension of $L$ is one. We can observe that if $Y$ weakly coarsely separates $L$, then all the deep components of $L \setminus Y$ intersect each other since one of the deep connected components must be an unbounded interval.    
\end{exmp}	

%It is not hard to see that this example can generalized to any asymptotic dimension. 

\section{Coarse Separation of Spaces.}

\begin{thm}\label{Coarse.thm6.8}
Every finitely generated group $G$ can be coarsely separated by a set $Y$ of asymptotic dimension strictly less than $asdimG$ (provided that $asdimG>0$).
\end{thm}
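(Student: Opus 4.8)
The plan is to adapt Construction I and Construction II from the proof of Theorem \ref{Coasre.thm6.4}, modifying only the final gluing step so that both deep components become connected rather than merely requiring one of them to be. The whole machinery of Lemmas \ref{Coasre.lem6.5} and \ref{Coasre.lem6.6}, which produces the families $\overline{\mathcal{U}}^n_n$ with controlled mesh and controlled $R_1$-multiplicity while separating $Cay(G,S)$ into a bounded and an unbounded piece, goes through unchanged. The asymptotic dimension estimate $\asdim \Gamma \le n-1$ in the final claim also transfers verbatim, since it only uses the mesh bounds, the multiplicity bounds, and the fact that consecutive $\Gamma_n$ are placed far apart along the bi-infinite geodesic $c$. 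So the only thing I need to supply is a reason why the separating set can be chosen to leave \emph{two} connected deep components, not just one connected and one arbitrary.

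First I would recall that each $\Gamma^n_0 = \bigcup \overline{\mathcal{U}}^n_n$ separates $Cay(G,S)$ into a bounded component containing $B_{\frac{d}{2}}$ and at least one unbounded component; the bounded component is genuinely connected and deep once transported to $x_n$ and placed so far out that the surrounding $\Gamma_k$ ($k \neq n$) cannot reach it. The subtlety in the weak-separation argument was that the unbounded side of each single separator need not be connected. To upgrade weak separation to genuine separation, I would exploit that $\Gamma = \bigcup_k \Gamma_k$ sits inside a sequence of thin annular regions strung along $c$, so that $Cay(G,S) \setminus N_R(\Gamma)$ is organized into (i) the bounded cores $B_{\frac{d}{2}}$-type regions transported to each $x_n$, and (ii) the complementary region. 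I would argue that, because the $x_n$ march off to infinity along a single bi-infinite geodesic with gaps growing faster than the diameters $D_{R_n}$, there exist two distinct deep components that are each connected: one can take the connected bounded core around some fixed $x_{n_0}$, and a second connected deep component obtained from the part of $Cay(G,S)$ lying ``beyond'' all the separators in one direction of $c$ — using geodesity of $X$ and the fact that $c$ itself provides a connected unbounded ray escaping to infinity that is not captured by $\Gamma$.

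More precisely, the key observation I would isolate is that between consecutive separators $\Gamma_{n}$ and $\Gamma_{n+1}$ the space $Cay(G,S)$ contains a point on $c$ whose entire path-component in the complement of $N_R(\Gamma)$ is connected and deep: the bi-infinite geodesic passes through each transported core, and the cores are pairwise $10^9 R$-separated, so a connected deep component nesting one core is disjoint from a connected deep component nesting another. Taking two cores around $x_{n_0}$ and $x_{n_0+1}$ (for $n_0$ large enough that both are deep relative to a fixed $R$) yields two connected deep components of $Cay(G,S) \setminus N_R(\Gamma)$. I would verify deepness by noting that each core contains an ever-growing ball $B_{\frac{d_{R_n}}{2}}(x_n)$, so it cannot be contained in any fixed neighbourhood of $\Gamma$.

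\textbf{The main obstacle} will be making rigorous the claim that each transported bounded core, together with the geodesic segment of $c$ entering it, forms a genuinely connected deep component of the global complement $Cay(G,S) \setminus N_R(\Gamma)$ rather than of the complement of a single $\Gamma_n$. The difficulty is interference: one must rule out that neighbourhoods of distant separators $\Gamma_k$ accidentally reconnect or re-separate a core. I would resolve this by the same quantitative separation estimates already used in the \textbf{First}, \textbf{Second}, and \textbf{Finally} cases of the previous proof — the gaps $d_S(x_{n_1},x_{n_2}) - (d_{R_{n_1}} + d_{R_{n_2}})$ exceed $10^9 R$ — which guarantee that within an $R$-neighbourhood only one $\Gamma_n$ is ever relevant, so local connectivity in $Cay(G,S)\setminus N_R(\Gamma)$ reduces to the single-separator picture already established in Lemma \ref{Coasre.lem6.5}. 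Assembling these local pictures into the two required global connected deep components is then a bookkeeping argument parallel to the multiplicity computation, and I would expect it to occupy the bulk of the formal write-up.
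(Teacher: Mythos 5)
Your outline omits the key new idea of the paper's proof, and the object you offer as the second connected deep component is not in fact deep. Recall the definition: a component $C$ of $X \setminus N_R(K)$ is deep if it is not contained in $N_{R'}(K)$ for \emph{any} $R'>0$. The bounded core around a \emph{fixed} $x_{n_0}$ is a bounded set, so it is contained in $N_{R'}(\Gamma)$ as soon as $R'$ exceeds roughly $2d_{R_{n_0}}$; the fact that the cores grow with $n$ only makes their (disconnected) union deep, never any single one of them. This is exactly why Constructions I--II by themselves yield only \emph{weak} coarse separation: the complement of $N_R(\Gamma)$ has one connected deep piece (the outside) and a deep but disconnected union of bounded cores. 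Your plan to ``take two cores around $x_{n_0}$ and $x_{n_0+1}$'' therefore produces two connected components neither of which is deep, and the quantitative disjointness estimates you invoke cannot repair this, because the obstruction is boundedness of each core, not interference between distant separators. Moreover, since $c$ crosses each sphere $\Gamma_n$, the cores cannot be joined into a single unbounded component without modifying $\Gamma$ itself.

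The paper's proof introduces precisely the ingredient your proposal is missing: a second sequence $\lbrace T_n \rbrace$ of ``tube'' separators, built with the same machinery of Lemmas \ref{Coasre.lem6.5} and \ref{Coasre.lem6.6} but starting from neighbourhoods of the geodesic segments of $c$ between $x_n$ and $x_{n+1}$ rather than from spheres. One then punctures each tube $T_i$ near the balls $B_{d_{R_i}/2^3}(x_i)$ and punctures each sphere $\Gamma_i$ along a thin neighbourhood of $c$, setting $Y=\bigcup_i (T_i^\prime \cup \Gamma_i^\prime)$. The punctures let the ray $c([0,\infty))$ thread through every sphere and travel inside every tube, so its component in $Cay(G,S)\setminus Y$ is a connected unbounded corridor containing points at distance tending to infinity from $Y$ (hence genuinely deep), while $c((-\infty,-N])$ lies in a second connected deep component outside all the tubes. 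The dimension bound $asdim\,Y< asdim\,G$ then follows from the finite union of the two families, as in your transferred estimate. Without this second family and the puncturing, the sphere construction alone cannot produce two connected deep components.
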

\begin{proof} Let $\lbrace \Gamma_n \rbrace_n$ be the sequence of constructed in the proof of theorem \ref{Coasre.thm6.4} and let $c :\mathbb{R} \longrightarrow Cay(G,S) $ be the bi-infinite geodesic from Construction II of the proof of theorem \ref{Coasre.thm6.4}.\\
 We observe that the subsets $\Gamma_n$ look like spheres of $G$. The idea is to construct another sequence $\lbrace T_n \rbrace_n$ of subsets of $G$ using the same strategy but instead of starting from spheres of $G$ we start from ``tubes-cylinders'' of $G$. Then by combining these two sequences we construct a subset $Y$ of asymptotic dimension strictly less than $asdimG$ (provided that $asdimG>0$) which coarsely separates $G$.\\
\textbf{Discription of the sequence $\lbrace T_n \rbrace_n$:}\\
We show how to construct $T_1$, then using the same strategy as that of $\Gamma_n$ one can construct the rest of the sequence.

We consider the sequence of numbers $ \lbrace R_n \rbrace _n$ as defined in the proof of theorem \ref{Coasre.thm6.4}. We start with a large enough neighborhood of a long enough geodesic segment $c_1$ of $c$. Then using the same idea as that of lemma \ref{Coasre.lem6.5} and lemma \ref{Coasre.lem6.6} we obtain a $D_{R_{1}}$-uniformly bounded family $\overline{\mathcal{W}}_{d_{R_{1}}^{\prime}}$ with  $R_{1}$-mult$(\overline{\mathcal{W}}_{d_{R_{1}}^{\prime}}) \leq  asdim\,G$.
Here the role of $d_{R_{1}}$ will be played by $d_{R_{1}}^{\prime} = \dfrac{d_{R_{1}}}{2^7 }$. The family $\overline{\mathcal{W}}_{d_{R_{1}}^{\prime}}$ was obtained from the same $D_{R_{1}}$-uniformly bounded covering $\overline{\mathcal{U}}_{1}$ of $G$ as that used in the proof of theorem \ref{Coasre.thm6.4}. 
 
We set $T_{1}= \bigcup \overline{\mathcal{W}}_{d_{R_{1}}^{\prime}}.$ The set $T_1$ looks like a ``tube-cylinder'' of $G$. 
Observe that $T_1$ separates the $Cay(G,S)$ into at least two connected components, one of which is bounded and one of which is unbounded. One of the bounded components, say $\Sigma_1$ contains a large enough neighborhood of a long enough geodesic segment $c_1$ of $c([0, \infty))$.\\
% We observe that $asdim\, \cup T_n < asdim\,G $.\\ 
\textbf{Construction of $Y$:}\\
We define $Y$ in steps. We may choose the end points of the segments $c_n$ to be $x_n$ and $x_{n+1}$ as these have been defined in the proof of theorem \ref{Coasre.thm6.4}. We will describe only the step 1 since every other step n is the analogue of step 1.\\
\textbf{Step 1:}\\
\textbf{(a)} We remove the points of $T_1$ from the balls $B_{\dfrac{d_{R_{1}}}{2^3 }}(x_1)$ and $B_{\dfrac{d_{R_{2}}}{2^3 }}(x_2)$.\\ 
\textbf{(b)} We remove the points of $\Gamma_1$ and $\Gamma_2$ from $N_{\dfrac{d_{R_{1}}}{2^9 }}(c([0,\parallel  \!x_ {2}\! \parallel_S]))$. By $\parallel  \!x\! \parallel_S$ we denote the distance of $x$ from $1_G$, and by $N_r(A)$ we denote the $r$-neighborhood of $A$. Here the step 1 ends.

 We set $T_i^\prime =T_i \setminus (B_{\dfrac{d_{R_{i}}}{2^3 }}(x_i) \cup B_{\dfrac{d_{R_{i+1}}}{2^3 }}(x_{i+1}))$ and\\
  $\Gamma_i^\prime =\Gamma_i \setminus ( N_{\dfrac{d_{R_{i-1}}}{2^9 }}(c([\parallel  \!x_ {i-1}\! \parallel_S,\parallel  \!x_ {i}\! \parallel_S])) \cup  N_{\dfrac{d_{R_{i}}}{2^9 }}(c([\parallel  \!x_ {i}\! \parallel_S,\parallel  \!x_ {i+1}\! \parallel_S])))$.

Then we set $$Y=\cup_{i \in \mathbb{N}} (T_i^\prime \cup \Gamma_i^\prime)$$
 
We observe that $Cay(G,S) \setminus Y$ contains at least two deep components one of which contains $c :[0, \infty ) \longrightarrow Cay(G,S) $ and one of which contains $c :(- \infty , -N] \longrightarrow Cay(G,S) $ 
for some $N>0$. 

 By the construction of $Y$ it follows that $$Y \subseteq \cup_{i \in \mathbb{N}} (T_i^\prime \cup \Gamma_i).$$
Since $asdim \cup T_n^\prime < asdim\,G $ and $asdim\cup \Gamma_n < asdim\,G $, we conclude that: $$asdim\,Y< asdim\,G.$$
% we can see that the sequence $\lbrace T_n \rbrace_n$ satisfies the equality  $asdim\,\Gamma_n =0 $ uniformly. Moreover, 
%Then by applying the infinite union theorem (theorem 25 from \cite{BD08})
%We have that 
%Finally,  Indeed, it follows from the fact that , and 
\end{proof}

By the proof of the previous theorem we can generalize to the following:

\begin{thm}\label{Coarse.thm6.9}
Let $X$ be a geodesic metric space containing a bi-infinite geodesic. If $asdimX=n>0$, then there exists a subspace $Y$ of asymptotic dimension strictly less than $n$ which coarsely separates $X$. 
\end{thm}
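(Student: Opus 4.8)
The plan is to transport the proof of Theorem \ref{Coarse.thm6.8} from the Cayley graph to an arbitrary geodesic metric space $X$, replacing the group translations $x_n \cdot (-)$ --- the only place where homogeneity is used --- by a fresh application of the auxiliary lemmas centered at each successive point. Fix a bi-infinite geodesic $c : \mathbb{R} \to X$ and take as centers the points $x_n = c(t_n)$, with parameters $R_n$, $d_{R_n}$ and spacings $t_n$ chosen exactly as in the proof of Theorem \ref{Coasre.thm6.4}.

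First I would reformulate Lemmas \ref{Coasre.lem6.5} and \ref{Coasre.lem6.6} for a geodesic metric space equipped with an arbitrary base point $o$, replacing the group sphere $S_d$ by the metric sphere $S_d(o) = \{x \in X : \mathrm{dist}(o,x) = d\}$ and the ball $B_r(1_G)$ by $B_r(o)$. The point is that neither proof uses the group structure: the descendant $U^\ast$ of a cover element $U$ is defined purely through first-passage of a path from $o$ into $\bigcup \overline{\mathcal{U}}_{S_d(o)}$, and the multiplicity estimate only invokes a single global uniformly bounded cover $\overline{\mathcal{U}}$ of $X$ with controlled $R$-multiplicity, which exists because $\asdim X = n$. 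Hence the conclusions --- uniform mesh bound $D_R$, $(R-1)$-multiplicity $\leq n$, and separation of $X$ into a bounded component containing $B_{d/2}(o)$ and an unbounded one --- hold verbatim, with all constants depending only on $R$ and the chosen global cover, not on $o$.

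Next I would rerun Construction I and Construction II with $o$ replaced, at step $n$, by $x_n$. Since the generalized lemmas apply at any center, each step produces directly (without translating) a sphere-like set $\Gamma_n$ about $x_n$ together with the refining families $\mathcal{V}^n_k$, all carrying the same mesh bound $D_{n-k+1} + 2\sum_{m=1}^{n-k} D_m$ and the same $\bigl(R_{n-k+1} - \sum_{m=1}^{n-k} D_m\bigr)$-multiplicity bound $\leq n$, precisely because these bounds are center-independent. The verification that $\asdim \bigcup_n \Gamma_n \leq n-1$ is then word-for-word the argument in Theorem \ref{Coasre.thm6.4}: with $n_0$ chosen as there, the $x_n$ are spaced so far apart along $c$ that at any fixed scale $R$ no $R$-ball meets two distinct $\Gamma_n$, while within each $\Gamma_n$ the family $\mathcal{V}^n_{n-(2n_0+1)}$ supplies a cover of $R$-multiplicity $\leq n$.

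Finally, to upgrade weak separation to genuine coarse separation I would import the tube-cylinder step from the proof of Theorem \ref{Coarse.thm6.8}: applying the lemmas with the metric sphere replaced by a long thickened geodesic segment $c_n = c([t_n, t_{n+1}])$, I construct a second sequence $\{T_n\}$, carve the connecting channels by the removals (a) and (b) to obtain $\Gamma_i'$ and $T_i'$, and set $Y = \bigcup_i (T_i' \cup \Gamma_i')$. As before, $X \setminus N_R(Y)$ has two connected deep components, one containing each end of $c$, and the inclusion $Y \subseteq \bigcup_i (T_i' \cup \Gamma_i)$ together with $\asdim \bigcup_n T_n' < n$ and $\asdim \bigcup_n \Gamma_n \leq n-1$ yields $\asdim Y < n$. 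The main obstacle is exactly the loss of homogeneity in the first step: one must confirm that running the construction afresh at each $x_n$ gives families whose mesh and multiplicity bounds are uniform in $n$. This is guaranteed by the center-independence of the constants in the generalized lemmas, so the far-apart argument of Theorem \ref{Coasre.thm6.4} carries over unchanged; the existence of a bi-infinite geodesic is what ensures that the two candidate deep components, sitting at the two ends of $c$, are genuinely distinct and deep.
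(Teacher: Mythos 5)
Your proposal follows exactly the route the paper intends: the paper's own ``proof'' of this theorem is the single remark that the argument for Theorem \ref{Coarse.thm6.8} generalizes, and your write-up correctly identifies that the only use of the group structure is the translation $x_n \cdot \Gamma_0^n$, which can be replaced by re-running Lemmas \ref{Coasre.lem6.5} and \ref{Coasre.lem6.6} at each center $x_n$ with center-independent constants coming from a single global cover. This is a faithful (and more detailed) rendering of the paper's argument.
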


The following example shows that theorem \ref{Coarse.thm6.9} cannot be generalized to any geodesic metric space.
\begin{exmp}\label{coarse.ex2} Let $A$ be the graph obtained by the wedge sum of infinitely many pointed intervals $\lbrace([0,a_n],0)\rbrace$, where $a_n \rightarrow \infty$. The asymptotic dimension of $A$ with the graph metric is one. We can observe that if $Y$ coarsely separates $A$, then all the deep connected components of $A \setminus Y$ must contain $0$, which is impossible.    
\end{exmp}	

Obviously, the previous example can generalized to any asymptotic dimension.

%\begin{thm}
%Every geodesic metric space of asymptotic dimension $n$, where $n \geq 2$, contains an asymptotic Cantor manifold. 
%\end{thm}

\section{Asymptotic analogues of Aleksandrov's theorem.}
We recall that a topological space X is a \textit{Cantor $n$-manifold}
if $X$ can not be separated by a closed $(n - 2)$-dimensional subset ($n \geq 2$). The concept of Cantor manifolds was introduced by P.S. Urysohn (see \cite{Ur}). Some examples of $n$-dimensional Cantor manifolds are the $n$-dimensional closed balls and the $n$-dimensional Euclidean space.\\

A maximal $n$-dimensional Cantor manifold in an $n$-dimensional compact space $X$ is called a \textit{dimensional component} of $X$. Each $n$-dimensional Cantor manifold of a compact Hausdorff space $X$  is contained in a unique dimensional component of $X$.\\
In 1947 (see \cite{Aleksandrov}) P.S. Aleksandrov proved the following theorem:

\begin{thm}
For $n \geq2$, every 
$n$-dimensional Hausdorff topological space $X$ contains an $n$-dimensional Cantor manifold. 
\end{thm}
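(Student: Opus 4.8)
The plan is to realize the desired Cantor manifold as a \emph{minimal} closed subset of $X$ that still ``carries'' the full dimension of $X$, making the word ``carries'' precise through the language of essential families. Throughout I take $X$ to be a compactum, the classical setting in which the notion of Cantor manifold and the tools below are available; the paper's informal definition omits compactness, but Aleksandrov's theorem is a statement about compacta. Recall that for a compactum $\dim X \ge k$ if and only if there is an \emph{essential family} $\{(A_i,B_i)\}_{i=1}^{k}$ of $k$ pairs of disjoint closed sets, meaning that whenever $L_i$ is a partition between $A_i$ and $B_i$ one has $\bigcap_{i=1}^{k} L_i \neq \emptyset$. Since $\dim X = n$, I fix such a family with $k=n$. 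Call a closed set $F\subseteq X$ \emph{essential} if the traces $\{(A_i\cap F, B_i\cap F)\}_{i=1}^{n}$ form an essential family in $F$.

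First I would produce a minimal essential closed set. Among the essential closed subsets of $X$, ordered by inclusion, every decreasing chain $\{F_\alpha\}$ has a lower bound, namely its intersection $F=\bigcap_\alpha F_\alpha$, provided $F$ is again essential; this is exactly where compactness enters, since if $F$ admitted partitions $L_i$ with empty intersection, these would extend to partitions over a neighbourhood and, by compactness of the $F_\alpha$, would already witness inessentiality of some $F_\alpha$. By Zorn's lemma a minimal essential closed set $M$ therefore exists. Since $M$ carries an essential family of $n$ pairs we get $\dim M \ge n$, while monotonicity of dimension gives $\dim M \le \dim X = n$; hence $\dim M = n$.

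It remains to prove that $M$ is a Cantor $n$-manifold, i.e.\ that no closed $C\subseteq M$ with $\dim C \le n-2$ separates $M$. Suppose, for contradiction, that such a $C$ exists, with $M\setminus C = G_1 \sqcup G_2$ a separation into disjoint nonempty relatively open sets; put $M_1 = G_1\cup C$ and $M_2 = G_2\cup C$, two proper closed subsets of $M$. By minimality of $M$ both $M_1$ and $M_2$ are inessential. I will manufacture partitions $L_1,\dots,L_n$ of the $n$ pairs \emph{in $M$} with $\bigcap_{i=1}^{n} L_i = \emptyset$, contradicting essentiality of $M$. Because $\dim C \le n-2$, the family of the first $n-1$ pairs is inessential on $C$, so there are partitions $Q_i$ in $C$ (for $i\le n-1$) with $\bigcap_{i=1}^{n-1} Q_i = \emptyset$. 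Using the partition extension lemma in the normal space $M$, I extend each $Q_i$ to a partition $L_i$ between $A_i\cap M$ and $B_i\cap M$ whose trace on $C$ is contained in $Q_i$. Then $K := \bigcap_{i=1}^{n-1} L_i$ is disjoint from $C$, so it splits as $K = K_1 \sqcup K_2$ with $K_j \subseteq G_j$. It now suffices to choose a partition $L_n$ between $A_n\cap M$ and $B_n\cap M$ that avoids $K$; such an $L_n$ is built separately on $M_1$ and $M_2$, using their inessentiality to kill the last coordinate on each side, and glued along $C$, after which $\bigcap_{i=1}^{n} L_i = K\cap L_n = \emptyset$.

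The main obstacle is precisely this last combination step: one must produce partitions on the two halves $M_1,M_2$ that avoid $K_1,K_2$ respectively, agree on the common face $C$ so that their union is genuinely a partition in $M$, and keep the total intersection empty. The dimension deficit $\dim C \le n-2$ is exactly the room needed to align the traces on $C$ while spending only $n-1$ of the $n$ available partitions there, and checking that the glued sets remain honest partitions with empty intersection is the delicate part of the argument. The preservation of essentiality under decreasing chains in the Zorn step is the other place where compactness must be invoked with care.
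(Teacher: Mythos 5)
The paper does not actually prove this statement: it is quoted as a classical theorem with a citation to Aleksandrov's 1947 paper \cite{Aleksandrov}, and the author's own contributions are the \emph{asymptotic} analogues proved in Section 4. So the comparison can only be with the classical literature, and there your proposal follows the standard route: essential families in the sense of Eilenberg--Otto, a Zorn's lemma argument producing a minimal essential closed set $M$, and a decomposition lemma showing that $M$ cannot be separated by a closed set of dimension at most $n-2$. Your restriction to compacta is sound --- the statement as printed, for arbitrary Hausdorff spaces, is not in the right generality, and the theorem really lives in the compact (or at least normal) setting. The Zorn step (with the compactness argument for decreasing chains), the identification $\dim M = n$, the inessentiality of the first $n-1$ pairs on $C$ because $\dim C \le n-2$, and the extension of the partitions $Q_i$ from $C$ to partitions $L_i$ of $M$ with $L_i \cap C \subseteq Q_i$ are all correct and standard.

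The gap is exactly where you locate it, and it is not a deferrable detail: it is the substance of the theorem. Having fixed $L_1,\dots,L_{n-1}$ and obtained $K = K_1 \sqcup K_2$ with $K_j \subseteq G_j$ closed and disjoint, you propose to find a single partition $L_n$ between $A_n\cap M$ and $B_n\cap M$ avoiding $K$, ``using the inessentiality of $M_1$ and $M_2$.'' But inessentiality of the full $n$-pair family on $M_1$ only yields $n$ partitions $P_1,\dots,P_n$ of $M_1$ with $\bigcap_i P_i=\emptyset$; it does not produce a single $(A_n,B_n)$-partition of $M_1$ that avoids the \emph{already chosen} closed set $K_1$, and for an arbitrary prior choice of $L_1,\dots,L_{n-1}$ no such partition need exist. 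The classical argument has to interleave the choices: the partitions on $M_1$, on $M_2$, and on $C$ must be built compatibly, with matching sides along $C$, so that they glue to global partitions whose total intersection is empty. That is precisely the lemma asserting that a family essential in $X = X_1\cup X_2$ but inessential in both closed pieces forces $\dim(X_1\cap X_2)\ge n-1$; until that lemma is proved, what you have is a correct outline of the known proof rather than a proof.
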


It is natural to ask if there exists an analogue for the asymptotic dimension. We give the following definitions to make this precise. 

\begin{defn}
Let $X$ be a connected metric space with $asdimX =n$, where $n \geq 2$. We say that $X$ is an \textit{$n$-asymptotic Cantor manifold}  if $X$ cannot be coarsely separated by a $k$-asymptotic dimensional subset, where $k \leq n\!-\!2$.
\end{defn}

%A maximal $n$-dimensional Cantor manifold in an $n$-dimensional compact space $X$ is called a \textit{dimensional component} of $X$.

\begin{defn}

Let $X$ be a metric space with $asdimX =n$, where $n \geq 2$. We say that $Y \subseteq X$ is an \textit{asymptotic component} of $X$, if $Y$ is an $n$-asymptotic Cantor  manifold and it is maximal in the following sense: for every other $n$-asymptotic Cantor  manifold $A \subseteq X$ such that $Y \subseteq A$ there exists $R >0$ such that $A \subseteq N_{R}(Y)$.
\end{defn}
\begin{flushleft}
\textbf{Question 2.} Do asymptotic Cantor manifolds exist?
\end{flushleft}
The short answer to this question is, yes. For example, consider a collection of balls $B_r$ ($r \geq 0$) in the Cayley graph of $\mathbb{Z}^n$ (with respect to the usual generating set) with center at $(2^{r+2},0, \ldots ,0)$ and radius $r$. Then connect those balls by straight lines. The geodesic metric space obtained by this procedure (graph metric) is an $n$-asymptotic Cantor manifold.\\
One could alter question 2 and ask about asymptotic Cantor manifolds with specific properties.\\

We give two examples of groups that are not asymptotic  Cantor manifolds.

\begin{exmp}
Let $A,B$ be finitely generated groups with $max \lbrace asdimA, asdimB \rbrace = n$, and let $C$ a subgroup of them such that $asdimC \leq n-2$. Then the group $C$ coarsely separates the amalgamated product $ A \ast_{C} B $, and $ asdim A \ast_{C} B =n.$

\end{exmp}

\begin{exmp}
Let $G$ a finitely generated group with $asdimG = n$, and let $N$ be a subgroup such that $asdimN \leq n-2$. Then the group $N$ coarsely separates the HNN-extension $ G \ast_{N}  $, and $ asdim G \ast_{N} =  n.$

\end{exmp}

\begin{cor}
Let $G$ a finitely generated group with $asdimG=n$. If the group $G$ is an $n$-asymptotic Cantor manifold, then it does not split over a subgroup $H$ with $asdimH \leq n-2$.
\end{cor}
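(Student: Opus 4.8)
The plan is to prove the contrapositive: assuming that $G$ splits over a subgroup $H$ with $\asdim H \leq n-2$, I will exhibit a coarse separation of $G$ by a subset of asymptotic dimension at most $n-2$, which directly contradicts the hypothesis that $G$ is an $n$-asymptotic Cantor manifold. By Bass--Serre theory, a splitting of $G$ over $H$ means that $G$ is isomorphic either to a nontrivial amalgamated free product $A \ast_H B$ (with $H \neq A$ and $H \neq B$) or to an HNN extension $A \ast_H$; a general graph-of-groups decomposition with an edge group $H$ reduces to such a one-edge splitting by collapsing all remaining edges. These are exactly the two situations treated in the two examples preceding the statement, so most of the geometric work is already done; what remains is to check that the hypotheses of those examples are satisfied and then to assemble the contradiction.

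First I would carry out the asymptotic-dimension bookkeeping needed to apply the examples. Since passing to a subgroup does not increase asymptotic dimension, every vertex group has asymptotic dimension at most $\asdim G = n$. For the matching lower bound I would invoke the standard subadditivity estimate for graphs of groups (Bell--Dranishnikov), namely $\asdim(A \ast_H B) \leq \max\{\asdim A,\ \asdim B,\ \asdim H + 1\}$ in the amalgamated case and $\asdim(A \ast_H) \leq \max\{\asdim A,\ \asdim H + 1\}$ in the HNN case. Because $\asdim H \leq n-2$ gives $\asdim H + 1 \leq n-1 < n$, the term $\asdim H + 1$ cannot account for the value $n = \asdim G$; hence $\max\{\asdim A,\ \asdim B\} = n$ in the amalgamated case (respectively $\asdim A = n$ in the HNN case). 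This is precisely the standing hypothesis placed on the vertex groups in the two examples.

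With the hypotheses in force, I would apply the relevant example directly: in the amalgamated case $H$ coarsely separates $A \ast_H B$, and in the HNN case $H$ coarsely separates $A \ast_H$. In either case $G$ is coarsely separated by the subset $H \subseteq Cay(G,S)$, which has $\asdim H \leq n-2$. This contradicts the assumption that $G$ is an $n$-asymptotic Cantor manifold, since by definition such a group admits no coarse separation by a $k$-asymptotic dimensional subset with $k \leq n-2$. The contradiction establishes the corollary.

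The only genuine obstacle I anticipate is the asymptotic-dimension estimate of the second paragraph: one must apply the graph-of-groups bound correctly and confirm that the lower bound $\max\{\asdim A,\ \asdim B\} = n$ really holds, so that the examples become applicable rather than vacuous. Everything past that point is a faithful transcription of the two worked examples through the contrapositive, and no new separating construction is required.
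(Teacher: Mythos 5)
Your proposal is correct and is essentially the argument the paper intends: the corollary is stated immediately after the two examples precisely so that it follows by taking the contrapositive, reducing a splitting to an amalgam $A \ast_H B$ or an HNN extension $A \ast_H$, and invoking the coarse separation by $H$ together with $\operatorname{asdim} H \leq n-2$ to contradict the Cantor manifold hypothesis. Your extra step of verifying $\max\{\operatorname{asdim} A, \operatorname{asdim} B\} = n$ via the Bell--Dranishnikov bound is a careful (and correct) check that the examples apply non-vacuously.
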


\subsection{Results.}

\begin{defn} Let $X$ be a metric space. We say that $X$ is \emph{point-transitive} if for any two points $x,y \in X$ there is an isometry, $f$, of $X$ such that $f(x)=y$.
\end{defn}

For example, any Cayley graph is point-transitive metric space.

\begin{defn} Let $X$ be a metric space. We say that $X$ has \emph{uniformly coarse connected spheres} if there exists $R>0$ such that the R-neighbourhood of the connected components of any sphere is connected.
\end{defn}

\begin{defn} Let $X$ be a metric space. We say that $\Sigma \subseteq X$ is \emph{c-net} of $X$, where $c\geq 0$, if any element of $X$ is at distance at most $c$ from $\Sigma$.
\end{defn}

\begin{defn} Let $X$ be a metric space. We say that $X$ is \emph{net-compact} if there exists a $c$-net, $\Sigma$, of $X$ such that every ball in $X$ contains finitely many elements of $\Sigma$.
\end{defn}

The following theorems are asymptotic versions of Aleksandrov's theorem.

\begin{thm}
Let $X$ be a point-transitive geodesic metric space of asymptotic dimension $n$, where $n \geq 2$. If $X$ is net-compact and has at least one infinite geodesic, then $X$ contains an asymptotic Cantor manifold. 
\end{thm}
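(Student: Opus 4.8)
The plan is to construct the asymptotic Cantor manifold explicitly as a ``string of growing balls'' threaded along the given infinite geodesic, in the spirit of the example following Question~2. Fix a basepoint $o$ and an infinite geodesic $c$, which by point-transitivity we may take to issue from $o$; net-compactness guarantees that $c$ is proper (and, if a bi-infinite line is preferred, that a limit of lengthening segments exists by an Arzel\`a--Ascoli argument). First I would choose a rapidly increasing sequence of radii $r_k \to \infty$ and parameters $t_k$ growing fast enough that the balls $B_{r_k}(c(t_k))$ are pairwise disjoint and separated by long arcs of $c$, and set
\[
M \;=\; c([0,\infty)) \cup \bigcup_{k} B_{r_k}(c(t_k)).
\]
By point-transitivity each $B_{r_k}(c(t_k))$ is isometric to $B_{r_k}(o)$, so the pieces of $M$ are controlled uniformly, and $M$ is connected since consecutive balls are joined through $c$. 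All three hypotheses enter here: the infinite geodesic supplies the spine along which there is a single coarse route to infinity, point-transitivity makes the balls uniform, and net-compactness makes them proper so that dimension can be counted inside them.

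Next I would prove $\operatorname{asdim} M = n$. The bound $\operatorname{asdim} M \le n$ is immediate from monotonicity of asymptotic dimension under subspaces. For the reverse inequality the point is that the growing balls already carry the full asymptotic dimension of $X$. Let $R_*$ be a scale witnessing $\operatorname{asdim} X \ge n$, so that $X$ has no uniformly bounded cover of $R_*$-multiplicity $\le n$. If instead the disjoint family $\{B_{r_k}(c(t_k))\}$ admitted, at scale $R_*$, covers of $R_*$-multiplicity $\le n$ with a single mesh bound, then by point-transitivity every $B_{r_k}(o)$ would carry such a cover, and a net-compactness (local finiteness) limiting argument over the exhaustion $X=\bigcup_r B_r(o)$ would assemble a uniformly bounded cover of $X$ of $R_*$-multiplicity $\le n$, contradicting the choice of $R_*$. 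Hence $\operatorname{asdim}\big(\bigsqcup_k B_{r_k}(c(t_k))\big)=n$, and adjoining the one-dimensional spine leaves the dimension unchanged by the union theorem, giving $\operatorname{asdim} M = n$.

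The main obstacle is the last step: showing that $M$ cannot be coarsely separated by any $Z \subseteq M$ with $\operatorname{asdim} Z \le n-2$. Suppose some such $Z$ did coarsely separate $M$, producing two connected deep components $A$ and $B$ of $M \setminus N_R(Z)$. Being deep, each is unbounded, and since the only unbounded region of $M$ is its tail, each must meet $B_{r_k}(c(t_k))$ for all large $k$; moreover, because the balls are joined only consecutively through $c$, the unique coarse route to infinity forces both $A$ and $B$ to pass through every sufficiently large ball. Thus for all large $k$ the set $N_R(Z) \cap B_{r_k}(c(t_k))$ must separate two disjoint through-routes inside $B_{r_k}(c(t_k))$. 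The crux is then a purely asymptotic-dimensional lower bound --- the coarse analogue of the fact that an $n$-dimensional ball cannot be separated by a set of dimension below $n-1$ --- forcing $\operatorname{asdim} Z \ge n-1$ and contradicting $\operatorname{asdim} Z \le n-2$. Establishing this ball-separation lower bound rigorously, via the union theorem together with the covering techniques of Lemma~\ref{Coasre.lem6.5} and Lemma~\ref{Coasre.lem6.6}, is where the real work lies; granting it, $M$ is connected, has $\operatorname{asdim} M = n$, and resists coarse separation by any $(n-2)$-dimensional subset, hence is an $n$-asymptotic Cantor manifold contained in $X$.
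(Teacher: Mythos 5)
Your construction of $M$ and your proof that $\operatorname{asdim} M = n$ match the paper's: the paper takes balls $B_k$ of radius $k$ centred at points $x_k$ on the geodesic with $d(x_1,x_k)=2^k$, sets $B=(\bigcup_k B_k)\cup\gamma$, and establishes $\operatorname{asdim} B=n$ exactly as you do, by translating covers of the $B_k$ back to the basepoint via point-transitivity and assembling a cover of a locally finite $c$-net using net-compactness. Up to that point you are on the paper's track.

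The genuine gap is in your last step, and it comes from a misreading of the geometry of the space you built. You claim that the two deep components $A$ and $B$ must both ``pass through every sufficiently large ball'' and that the separator must therefore ``separate two disjoint through-routes inside $B_{r_k}(c(t_k))$,'' and you then defer the whole proof to an unproved ``coarse analogue of the fact that an $n$-dimensional ball cannot be separated by a set of dimension below $n-1$.'' But in $M$ there are no two disjoint through-routes: between consecutive balls, $M$ consists of nothing but the arc of $c$, so every interior point of such an arc is a cut point of $M$. A deep component is an unbounded connected subset of $M\setminus N_R(Z)$, and a connected subset of $M$ containing points on both sides of a cut point must contain that cut point; hence each deep component contains the midpoints (indeed tails) of all sufficiently far connecting arcs. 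Two such components therefore intersect, a contradiction --- with no dimension-theoretic input at all. This is precisely the paper's Claim 2: the constructed space cannot be coarsely separated by \emph{any} subset, of any asymptotic dimension, because of its bottleneck structure (compare the wedge-of-intervals example in Section 3). By routing the argument through a hypothetical ball-separation lower bound you have replaced a two-line observation with a substantial open-ended problem, and since you explicitly leave that bound unproved, the proposal as written does not establish the theorem.
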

\begin{proof} Let $\gamma$ be an infinite geodesic of $X$ issuing from $x_1$. We construct a set of balls $\lbrace B_n \rbrace$ as follows:\\
Let $B_1$ be the ball of radius $1$ and center $x_1$,
% $B_2$ be the ball of radius $2$ and center $x_2$, where $d(x_1, x_2)=2^2$, 
and $B_n$ be the ball of radius $n$ and center $x_n$, where $x_n \in \gamma$ and $d(x_1, x_n)=2^n$.\\

Let $\gamma_n$ be the geodesic segment of $\gamma$ which connects the boundary of the ball $B_{n-1}$ with the boundary of the ball $B_n$. We set $B=(\bigcup_n B_n )\cup \gamma$. We observe that $B$ is connected.\\

\begin{flushleft}
\textbf{Claim 1:} $asdim B=n$.
\end{flushleft}
\begin{proof}
Let $\Sigma$ be a $c$-net of $X$ such that every ball in $X$ contains finitely many elements of $\Sigma$. Observe that $\Sigma$ is quasi-isometric to $X$, so $asdim\Sigma= asdim X=n$.

We assume that $asdim B<n$, then for every $R > 0$ there exists a uniformly bounded covering $\mathcal{U}_R$ of $B$ such that the R-multiplicity of $\mathcal{U}_R$ is at most $n$.

We consider the coverings $\mathcal{U}_R^n = \lbrace U \in \mathcal{U}_R \vert U \cap B_n \neq \emptyset \rbrace$ of the balls $B_n$. Since $X$ is point-transitive we can ''translate''  $B_n$'s to $x_1$. Then we observe that the translations of the coverings $\mathcal{U}_R^n$, forms coverings for the balls of radius $n$ around $x_1$, for any $n$.\\
We further observe that since each ball $B_n(x_1)$ contains finitely many elements of $\Sigma$ we can progressively construct a uniformly bounded cover, $\mathcal{V}_R$, of $\Sigma$ using the translations of the coverings $\mathcal{U}_R^n$. By construction, the R-multiplicity of $\mathcal{V}_R$ is at most $n$, thus $asdim\Sigma < n$, which is a contradiction.
\end{proof}

\begin{flushleft}
\textbf{Claim 2:} $B$ is an asymptotic Cantor manifold.
\end{flushleft}
\begin{proof}
Let $S$ be a subset of $B$ which coarsely separates $B$. So, $X \setminus S$ contains at least two deep components $C_1$ and $C_2$.\\
Observe that $C_1$ must contain $\cup_{i \geq k} \gamma_i$, for some $k$ (since $C_1$ is connected).
Similarly, $C_2$ must contain $\cup_{i \geq m} \gamma_i$, for some $m$. Then $C_1$ and $C_2$ are not disjoint, which is a contradiction.

\end{proof}
This completes the proof of the theorem.

\end{proof}

\begin{cor}
Every finitely generated group of asymptotic dimension $n$, where $n \geq 2$, contains an asymptotic Cantor manifold. 
\end{cor}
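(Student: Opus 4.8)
The plan is to deduce this corollary directly from the preceding theorem by taking $X = Cay(G,S)$, the Cayley graph of $G$ with respect to a finite generating set $S$, and checking that it satisfies every hypothesis. It is a geodesic metric space, and since asymptotic dimension is a quasi-isometry invariant we have $\asdim Cay(G,S) = \asdim G = n \geq 2$. It therefore remains to verify three things: that $Cay(G,S)$ is point-transitive, that it is net-compact, and that it carries at least one infinite geodesic.

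Point-transitivity is immediate: the word metric is left-invariant, so each left translation $x \mapsto gx$ is an isometry of $Cay(G,S)$, and given two vertices $x,y$ the element $g = yx^{-1}$ sends $x$ to $y$. This is exactly the remark that any Cayley graph is point-transitive; note that in the proof of the theorem the relevant centres $x_n$ lie on a geodesic and may be taken to be vertices, so transitivity on vertices is all that is used. For net-compactness I would take $\Sigma = G$, the vertex set, which is a $\tfrac{1}{2}$-net of the geodesic Cayley graph; since $S$ is finite the graph is locally finite and every ball of finite radius contains only finitely many vertices, so every ball contains finitely many elements of $\Sigma$. Hence $Cay(G,S)$ is net-compact.

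The one point that needs a genuine (if standard) argument is the existence of an infinite geodesic. Because $\asdim G = n \geq 2 > 0$ the group $G$ is infinite, so $Cay(G,S)$ is an infinite, connected, locally finite graph; in particular every sphere $S_m$ about $1_G$ is nonempty, since were some $S_m$ empty the ball $B_{m-1}(1_G)$ would be all of the connected graph and hence finite. Choosing a geodesic segment $\sigma_m$ from $1_G$ to a vertex of $S_m$ and applying König's lemma — infinitely many of the $\sigma_m$ agree on their first edge, infinitely many of those on their second edge, and so on — produces a geodesic ray issuing from $1_G$. This is precisely the ``at least one infinite geodesic'' required by the hypothesis.

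With all three hypotheses in place, the theorem applies and yields an asymptotic Cantor manifold contained in $Cay(G,S)$, which proves the corollary. I expect the only mild obstacle to be the geodesic-ray extraction; everything else is a direct translation of the group-theoretic structure into the language of the theorem.
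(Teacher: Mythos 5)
Your proposal is correct and is exactly the argument the paper intends: the corollary is stated without proof as an immediate application of the preceding theorem to a Cayley graph, whose point-transitivity the paper explicitly remarks on, and whose net-compactness and possession of a geodesic ray (via local finiteness and König's lemma, as you note) are standard. You have simply filled in the routine verifications the paper leaves implicit.
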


\begin{thm}
Let  $X$ be a geodesic metric space of asymptotic dimension $n$, where $n \geq 2$. If $X$ contains at least one infinite geodesic and has uniformly coarse connected spheres, then $X$ contains an asymptotic Cantor manifold. 
\end{thm}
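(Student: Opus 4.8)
The plan is to imitate the proof of the companion theorem (the point-transitive, net-compact case), keeping the same coarse skeleton but replacing the homogeneity input by the connectivity of spheres. Fix a base point $x_{1}$ on an infinite geodesic $\gamma$, which I regard as a ray issuing from $x_{1}$. Rather than the balls used before, I would build the candidate manifold out of \emph{fattened spheres}: choose a sparse increasing sequence of radii $\rho_{n}\to\infty$ together with widths $w_{n}\to\infty$ with $w_{n}\ll\rho_{n}$, let $R$ be the constant from the uniformly coarse connected spheres hypothesis, and set $\Sigma_{n}=N_{w_{n}}(S(x_{1},\rho_{n}))$ with $w_{n}\ge R$. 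Then each $\Sigma_{n}$ is connected by hypothesis and contains $\gamma(\rho_{n})$, so $B=\gamma\cup\bigcup_{n}\Sigma_{n}$ is connected, and $\asdim B\le\asdim X=n$ by monotonicity. This is the exact analogue of the earlier construction with balls replaced by fattened spheres, the point being that the sphere hypothesis is what I intend to exploit.

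The non-separation part is then identical to the companion theorem and uses none of the new hypotheses. For sparse enough radii each shell $\Sigma_{n}$ is bounded and the shells are pairwise disjoint, so two shells are joined inside $B$ only through the ray $\gamma$; hence any unbounded connected subset of $B$ — in particular any deep component of $B\setminus S$ — must contain a tail $\bigcup_{i\ge k}\gamma_{i}$ of $\gamma$, where $\gamma_{i}$ is the segment of $\gamma$ joining $\Sigma_{i}$ to $\Sigma_{i+1}$. Since $\gamma$ is one-ended, any two deep components share such a tail and therefore intersect, so $B$ cannot be coarsely separated by any subset $S$ whatsoever, and in particular not by one of asymptotic dimension $\le n-2$. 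Thus $B$ will be an $n$-asymptotic Cantor manifold as soon as the dimension computation below is in place.

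The heart of the proof — and the step I expect to be the main obstacle — is showing $\asdim B\ge n$. Here I would use that removing a bounded ball from $X$ leaves the asymptotic dimension unchanged (finite union theorem), so the $n$-dimensional obstructions witnessing $\asdim X=n$ occur at arbitrarily large distances from $x_{1}$; this lets me choose the radii $\rho_{n}$ so that, at scale $w_{n}$, the space $X$ already exhibits full asymptotic dimension near the sphere $S(x_{1},\rho_{n})$. The key geometric fact is that for every $z\in S(x_{1},\rho_{n})$ the entire ball $B(z,w_{n})$ lies in $\Sigma_{n}\subseteq B$, by the triangle inequality, so each shell contains balls of radius $w_{n}\to\infty$ sitting precisely at these obstructions. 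A hypothetical uniformly bounded cover of $B$ of small $R'$-multiplicity would restrict, on shells with $w_{n}$ much larger than its mesh, to uniformly bounded covers of these obstruction balls.

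The delicate point is that a single bounded ball can always be covered by one set, so no individual ball forces high multiplicity; the lower bound must come from the \emph{unboundedness} of $B$ together with the coarse connectivity of the spheres. Concretely, I would cover $X$ shell by shell, using the connectivity of $N_{R}(S(x_{1},\rho))$ to patch the cover of $\Sigma_{n}$ to the cover of $\Sigma_{n+1}$ with controlled overlap, thereby assembling from a hypothetical low-multiplicity cover of $B$ a single uniformly bounded cover of all of $X$ of $R'$-multiplicity $\le n$ — contradicting $\asdim X=n$. Making this patching uniform (bounded mesh and multiplicity $\le n$ simultaneously) using only the coarse connectivity of spheres, with no homogeneity available, is exactly where the argument is hardest; the uniformly coarse connected spheres hypothesis is what plays, in this step, the combined role of point-transitivity and net-compactness in the companion theorem.
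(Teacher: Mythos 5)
Your overall architecture (a connected union of shells strung along the geodesic $\gamma$, with non-separation following because every deep component must swallow a tail of $\gamma$) matches the paper's, and the non-separation half is fine. But the dimension lower bound $\asdim B\ge n$ --- which you correctly identify as the heart of the matter --- is not actually proved, and the route you sketch for it would not work. Your shells $\Sigma_n=N_{w_n}(S(x_1,\rho_n))$ with $w_n\ll\rho_n$ leave enormous uncovered gaps between consecutive spheres, so a hypothetical low-multiplicity cover of $B$ gives you no information whatsoever about those gaps, and there is no way to ``patch'' it into a cover of all of $X$; the coarse connectivity of spheres supplies connecting paths, not covers of the intervening annular regions. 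Moreover, the preliminary idea of locating ``$n$-dimensional obstructions'' inside balls $B(z,w_n)$ is not meaningful: every bounded ball has asymptotic dimension $0$, so no finite-radius ball witnesses $\asdim X=n$, and you cannot choose $\rho_n$ so that the obstruction ``sits at'' $S(x_1,\rho_n)$.

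The paper's proof avoids this difficulty entirely by making the shells thick enough that the lower bound comes for free from the Finite Union Theorem. It takes annuli $A_n=\lbrace x : 2^{c(n+1)}\le d(x_1,x)\le 2^{c(n+2)}\rbrace$, whose inner and outer radii both grow geometrically, so that $X_0=\bigcup_{n\text{ even}}A_n$ and $X_1=\bigcup_{n\text{ odd}}A_n$ together cover $X$ minus a ball. Since $X$ minus a ball is quasi-isometric to $X$, the Finite Union Theorem forces $\max\lbrace\asdim X_0,\asdim X_1\rbrace=n$; whichever half achieves $n$ is then fattened by $N_c(\cdot)$ and joined by $\gamma$ to produce the Cantor manifold. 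If you replace your thin fattened spheres by such alternating thick annuli, your argument closes; as written, the lower bound is a genuine gap.
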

\begin{proof}
Let $\gamma$ be an infinite geodesic of $X$ issuing from $x_1$. We assume that $X$ has uniformly connected spheres with constant $c>1$. We consider annuli $A_n= \lbrace x \in X \vert 2^{c(n+1)}\leq d(x_1, x) \leq 2^{c(n+2)} \rbrace$. We set $X_0= \cup_{n \in \lbrace 2k \rbrace }A_n$, and $X_1= \cup_{n \in \lbrace 2k+1 \rbrace}A_n$.\\

%We have $X \setminus B_{2^{c(n+1)}}(x_1)= X_0 \cup X_1 $.
We recall:

\begin{thm}{(Finite Union Theorem, see \cite{BD01}})
For every metric space presented as a finite union $X= \cup_{i} X_{i}$ we have
\begin{center}
$asdimX = max\lbrace asdimX_{i} \rbrace$.
\end{center}
\end{thm}

In view of the finite union theorem and the fact that $X \setminus$ ball is quasi-isometric to $X$, we obtain that either $X_0$ or $X_1$ (or both) has asymptotic dimension equal to $n$.\\
Without loss of generality we assume that $asdimX_0=n$. We set $\Gamma = N_c(X_0) \cup \gamma$, observe that $\Gamma$ is connected and $asdim\Gamma=n$.\\

%2^{c(n+1)}\leq d(x_1, x) \leq 2^{c(n+2)} 

We will show that $\Gamma$ is an asymptotic Cantor manifold. Otherwise, let $S$ be a subset of $\Gamma$ which coarsely separates $\Gamma$. So, $\Gamma \setminus S$ contains at least two deep components $C_1$ and $C_2$. Let $y_k$ be the mid-point of the geodesic segment of $\gamma$ that connects the sphere (with center $x_1$) of radius $2^{c((2k-1)+1)}$ with the sphere (with center $x_1$) of radius $2^{c(2k+1)}$.

Observe that $C_1$ must contain $\cup_{i \geq m_1 } y_i$, for some $m_1$ (since $C_1$ is connected).
Similarly, $C_2$ must contain $\cup_{i \geq m_2} y_i$, for some $m_2$. Then $C_1$ and $C_2$ are not disjoint, which is a contradiction.

\end{proof}

%The following example shows that the assumptions of \emph{point-transitivity} and \emph{uniformly coarse connected spheres} are necessary.
%\begin{exmp}\label{coarse.ex2} Let $n$ be a natural number larger than $1$, and let $A$ be the graph obtained by the wedge sum of infinitely many pointed intervals $\lbrace([0,\infty)_k,0)\rbrace$. We fix a point $a_k$ inside each interval $[0,\infty)_k$, where $a_k<a_{k+1}$ and $a_k \rightarrow \infty$. For each $k$, we glue to $a_k$ a copy of a ball, $B_k$, of the Cayley graph of $\mathbb{Z}^n$ with radius $k$. Let $X_n$ be the resulting graph. We consider $X_n$ as a metric space with the graph metric.\\
%Observe that $X_n$ is a net-compact geodesic metric space that contains an infinite geodesic and has asymptotic dimension $n$.

%We claim that $X_n$ doesn't contain any $n$-asymptotic Cantor manifold. Indeed, if $\Gamma \subseteq X_n$ is an   
%$n$-asymptotic Cantor manifold, then $\Gamma$ must intersect infinitely many balls $B_k$, because $asdim \Gamma =n$. Also, since $\Gamma$ is connected and intersects infinitely many balls $B_k$, then it must contains the base point $0$. Observe that the   

%The asymptotic dimension of $A$ with the graph metric is one. We can observe that if $Y$ coarsely separates $A$, then all the deep connected components of $A \setminus Y$ must contain $0$, which is impossible.    
%\end{exmp}	

\section{Questions.}

\begin{flushleft}
\textbf{Question 3.} Do asymptotic components always exist?
\end{flushleft}

%We make the following

%\textbf{Question 2.} Does there exist an analogue of the theorem 3.1 for asymptotic Cantor manifolds ?\\

%\textbf{Question 3a.} Does there exist $n$-asymptotic dimensional Cantor manifold of a group $G$ that corresponds to a coset of a subgroup ?\\

\begin{flushleft}
\textbf{Question 4.} Under which circumstances asymptotic components of a group $G$ correspond to subgroups ?
\end{flushleft}

\begin{flushleft}
\textbf{Question 5.} Let $G$ be a finitely generated group, and let $S$ be a subset of $G$ such that $asdimS < asdim G$. We further assume that $S$ coarsely separates $G$. Under which circumstances $G$ splits over a subgroup quasi-isometric to $S$?
\end{flushleft}

Stallings showed that if a compact set coarsely separates the Cayley graph of a finitely generated group $G$, then G splits over a finite group. However, P. Papazoglou and T.Delzant (see \cite{PD}) showed that for any $n$ there is a hyperbolic group $G$ with $asdimG > n$ which is separated coarsely by a uniformly embedded set $S$ of $asdimS = 1$ and which has property FA, so it does not split.

\begin{conj}
Every $n$-dimensional Euclidean space is an $n$-asymptotic Cantor manifolds, where $n \geq 2$.
\end{conj}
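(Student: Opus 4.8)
The plan is to prove the coarse analogue of the classical fact that a subset of topological dimension at most $n-2$ cannot separate $\mathbb{R}^n$. The classical statement is cleanest through Alexander duality: for compact $Y \subseteq \mathbb{R}^n$ one has $\tilde{H}_0(\mathbb{R}^n \setminus Y) \cong \check{H}^{n-1}(Y)$, and $\dim Y \le n-2$ forces $\check{H}^{n-1}(Y)=0$, so the complement is connected. I would set up the coarse mirror of exactly this computation. Concretely, let $S \subseteq \mathbb{R}^n$ with $\asdim S \le n-2$, and suppose for contradiction that $S$ coarsely separates $\mathbb{R}^n$; so there is $R>0$ for which $\mathbb{R}^n \setminus N_R(S)$ has at least two connected deep components.

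First I would pass to the uniformly discrete model $\mathbb{Z}^n$, a net of $\mathbb{R}^n$, which is a coarse Poincar\'e duality space of formal dimension $n$, and invoke coarse Alexander duality (in the sense of Kapovich and Kleiner). The existence of at least two connected deep components of $\mathbb{R}^n \setminus N_R(S)$ should be encoded as the nontriviality of a reduced zero-dimensional coarse homology group of the complement — morally, the \emph{deep ends} of $\mathbb{R}^n$ cut off by $N_R(S)$. Coarse duality then converts this class, with the expected degree shift by $n$, into a nonzero class in the coarse (Roe) cohomology $HX^{\,n-1}(S)$ of the separating set. This bookkeeping is the analogue of the isomorphism $\tilde{H}_0 \cong \check{H}^{n-1}$ above.

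Second, I would kill that class by dimension. Since $\asdim S \le n-2$, for every scale $S$ admits a uniformly bounded cover whose nerve has dimension at most $n-2$; by Dranishnikov's inequality the asymptotic (coarse) cohomological dimension of $S$ is at most $\asdim S \le n-2$. In particular $HX^{\,k}(S)=0$ for every $k \ge n-1$, so in particular $HX^{\,n-1}(S)=0$. This contradicts the nonzero class produced by duality, so no such $S$ can coarsely separate $\mathbb{R}^n$. As $\mathbb{R}^n$ is connected with $\asdim \mathbb{R}^n = n \ge 2$, this is exactly the statement that it is an $n$-asymptotic Cantor manifold.

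The hard part, and presumably the reason the statement is only conjectured, is the middle step: rigorously matching the combinatorial notion of a \emph{connected deep component} used throughout this paper with a genuine coarse-homology class, and verifying that coarse Alexander duality applies with the correct degree shift and with supports controlled near $S$ rather than near $N_R(S)$. A purely elementary route — trying to join two deep points by a path detouring around $N_R(S)$, imitating general position since $1+(n-2)<n$ — is tempting but delicate, because a set of asymptotic dimension $n-2$ can still be geometrically wild at every scale, so no single generic direction need let every line escape $N_R(S)$; one would have to organize the detours coherently across all scales, which is precisely what the duality argument packages automatically. I would therefore treat the coarse-duality formulation as the primary line of attack and the elementary path-detour picture as a source of intuition and of the correct constants.
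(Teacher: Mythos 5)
The paper offers no proof of this statement: it is stated explicitly as a conjecture, so there is no argument of the author's to compare yours against. Your proposal is the natural line of attack (and you are candid that it is a strategy rather than a proof), but as written it has two genuine gaps, which is consistent with the statement remaining open in the paper.

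First, the duality step. Coarse Alexander duality in the Kapovich--Kleiner sense is proved for bounded-geometry uniformly acyclic complexes and relates the compactly supported cohomology of the complement system $\{X \setminus N_R(S)\}_R$ to a coarse (co)homology of $S$; translating ``there exist at least two \emph{connected deep} components for some $R$'' in the sense of this paper into a nonzero class in the relevant degree is exactly the unproved bookkeeping. Note in particular that deep components can proliferate or die as $R$ grows, so one must check that the paper's quantifier (``for some $R$'') survives the passage to the limit over $R$ in which the duality isomorphism lives, and that the class is supported near $S$ rather than near $N_R(S)$. Second, the vanishing step. You need $\asdim S \le n-2$ to force the vanishing of precisely the cohomology group that appears on the other side of the duality. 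The inequality bounding Dranishnikov's asymptotic cohomological dimension by $\asdim$ is available, but it concerns a specific anti-\v{C}ech cohomology; identifying that theory with the one produced by the duality isomorphism (Roe's $HX^\ast$, or the compactly supported cohomology of a Rips complex) is a nontrivial comparison that you assert rather than establish. Until both translations are carried out, the argument is a plausible program, not a proof; the elementary ``general position detour'' route you mention as intuition fails for the reason you yourself give, so it cannot serve as a fallback.
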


\begin{conj}
Every $n$-dimensional hyperbolic space, $\mathbb{H}^n$, is an $n$-asymptotic Cantor manifolds, where $n \geq 2$.
\end{conj}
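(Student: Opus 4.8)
The plan is to reduce the coarse statement to a classical topological separation fact on the boundary sphere $\partial_\infty \mathbb{H}^n = S^{n-1}$. First I would record the (classical) equality $\asdim \mathbb{H}^n = n$, so that $\mathbb{H}^n$ is a connected space of the required asymptotic dimension and it only remains to rule out coarse separation by subsets of small asymptotic dimension. So suppose, for contradiction, that some $Y \subseteq \mathbb{H}^n$ with $\asdim Y \le n-2$ coarsely separates $\mathbb{H}^n$; by definition there is $R>0$ and two connected deep components $C_1, C_2$ of $\mathbb{H}^n \setminus N_R(Y)$.

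\textbf{Step 1 (coarse separation forces boundary separation).} I would show that the limit set $\Lambda(Y)\subseteq S^{n-1}$ separates $S^{n-1}$. Since $C_1,C_2$ are deep they contain points arbitrarily far from $Y$, so each has nonempty limit set, and one can pick $\xi_i\in\Lambda(C_i)\setminus\Lambda(Y)$. If $S^{n-1}\setminus\Lambda(Y)$ were connected I would join $\xi_1$ to $\xi_2$ by a path inside the open set $S^{n-1}\setminus\Lambda(Y)$ and ``lift'' it to a path in $\mathbb{H}^n$ that stays uniformly far from $Y$ (the coarse analogue of pushing a boundary path into the interior, using properness of $\mathbb{H}^n$ together with the fact that the part of $\mathbb{H}^n$ shadowed by $S^{n-1}\setminus\Lambda(Y)$ remains away from $Y$). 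Such a lift would connect $C_1$ to $C_2$ inside $\mathbb{H}^n\setminus N_R(Y)$, contradicting that they are distinct components. Hence $S^{n-1}\setminus\Lambda(Y)$ is disconnected.

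\textbf{Step 2 (separating $S^{n-1}$ needs topological dimension $n-2$).} By Alexander duality in the sphere, for compact $F\subseteq S^{n-1}$ one has $\check H^{\,n-2}(F)\cong \tilde H_0(S^{n-1}\setminus F)$, so if $S^{n-1}\setminus F$ is disconnected then $\check H^{\,n-2}(F)\ne 0$ and therefore $\dim_{top}F\ge n-2$. Applied to $F=\Lambda(Y)$ this yields $\dim_{top}\Lambda(Y)\ge n-2$. \textbf{Step 3 (asymptotic dimension controls boundary dimension).} I would then invoke the inequality $\dim_{top}\Lambda(Y)\le \asdim Y-1$; combined with the hypothesis $\asdim Y\le n-2$ this gives $\dim_{top}\Lambda(Y)\le n-3$, which is incompatible with Step 2. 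This contradiction finishes the argument.

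The hard part is Step 3, and to a lesser extent the lifting in Step 1. For Step 3 the cleanest route is the Buyalo--Lebedeva equality $\asdim Z=\dim_\ell\partial_\infty Z+1$ (where $\dim_\ell$ is the linearly controlled/capacity dimension, so in particular $\dim_{top}\partial_\infty Z\le \asdim Z-1$), applied to the convex hull $Z=\mathrm{Hull}(\Lambda(Y))$, which is a visual hyperbolic space with $\partial_\infty Z=\Lambda(Y)$. This reduces Step 3 to the auxiliary inequality $\asdim \mathrm{Hull}(\Lambda(Y))\le \asdim Y$, i.e.\ to showing that for a coarsely separating $Y$ the passage to the hull does not raise asymptotic dimension (morally because such a $Y$ behaves like a thickened hypersurface, so nearest-point projection $\mathrm{Hull}(\Lambda(Y))\to Y$ is coarsely dimension-preserving); this is where I expect the genuine difficulty to lie. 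An alternative is to bound $\dim_{top}\Lambda(Y)$ directly by a shadow-covering argument, converting a uniformly bounded cover of $Y$ of controlled $R$-multiplicity into a cover of $\Lambda(Y)$ of controlled multiplicity, losing exactly one dimension in the radial direction. Finally, it is worth noting a second, homological, attack: $\mathbb{H}^n$ is a coarse $PD_n$ space, so by the Kapovich--Kleiner coarse Alexander duality a coarsely separating $Y$ must carry a nontrivial $(n-1)$-dimensional coarse cohomology class, and a bound of coarse cohomological dimension by asymptotic dimension would again force $\asdim Y\ge n-1$, contradicting $\asdim Y\le n-2$.
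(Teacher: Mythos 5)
First, note that the paper itself offers no proof of this statement: it is stated as a conjecture and left open, so your attempt cannot be measured against an argument in the text, only on its own terms. On those terms there is a genuine gap, concentrated exactly where you suspected, in Step 3 — but the problem is worse than a ``hard step'': the inequality $\dim_{top}\Lambda(Y)\le \asdim Y-1$ is simply false for general subsets $Y\subseteq\mathbb{H}^n$, and so is the auxiliary bound $\asdim \mathrm{Hull}(\Lambda(Y))\le \asdim Y$. Take a countable dense set of directions $\{\xi_k\}\subseteq S^{n-1}$ and let $Y=\{y_k\}$ where $y_k$ lies on the ray from a basepoint toward $\xi_k$ at distance $2^k$. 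For any $R$ the $R$-components of $Y$ are eventually singletons, so $\asdim Y=0$, yet $\Lambda(Y)=S^{n-1}$ and $\mathrm{Hull}(\Lambda(Y))=\mathbb{H}^n$. Thus any version of Step 3 must use the hypothesis that $Y$ coarsely separates to show that $Y$ approaches its limit set ``coherently'' (quasiconvexly, or with controlled shadows), and that implication is precisely the missing idea; neither the nearest-point-projection remark nor the shadow-covering sketch supplies it, since a uniformly bounded cover of $Y$ gives no cover of $\Lambda(Y)$ of controlled multiplicity when $Y$ is radially sparse. The Buyalo--Lebedeva equality applies to the intrinsic (capacity) dimension of the boundary of a visual hyperbolic space, not to limit sets of arbitrary subsets, so it cannot be invoked without first solving this problem. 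Note also that the Delzant--Papasoglu examples cited in the paper (hyperbolic groups coarsely separated by uniformly embedded sets of asymptotic dimension $1$) show that any correct argument must exploit $\partial_\infty\mathbb{H}^n=S^{n-1}$ in an essential way, so the difficulty you are running into is not an artifact of your formulation.

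There is a secondary gap in Step 1: you pick $\xi_i\in\Lambda(C_i)\setminus\Lambda(Y)$, but a deep component $C$ can contain points arbitrarily far from $Y$ that nevertheless converge to points of $\Lambda(Y)$ (in the upper half-plane, the points $(1/k,e^{-k^2})$ are arbitrarily far from the vertical axis yet converge to its endpoint $0$), so a priori $\Lambda(C_i)\subseteq\Lambda(Y)$ is possible and must be excluded. The lifting of a boundary path to a path avoiding $N_R(Y)$ is plausible by compactness of the path and properness of $\mathbb{H}^n$, but it too needs the quantitative statement that shadows of neighbourhoods of points of $S^{n-1}\setminus\Lambda(Y)$ eventually miss $N_R(Y)$, written out carefully. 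Steps 2 (Čech--Alexander duality forcing $\dim_{top}F\ge n-2$ for a compact separator $F\subseteq S^{n-1}$) is correct. The overall architecture — reduce coarse separation of $\mathbb{H}^n$ to topological separation of $S^{n-1}$ — is a reasonable line of attack on the conjecture, but as written the argument does not close.
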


\begin{flushleft}
\textit{Address:}  École Normale Supérieure, 45 Rue d'Ulm, 75005 Paris\\
\emph{Email:} Panagiotis.Tselekidis@ens.fr
\end{flushleft}

%Mathematical Institute, University of Oxford, Andrew Wiles Building, Woodstock Rd, Oxford OX2 6GG, U.K.

%\textit{Address 2:} The Queen's College, High Street,
%Oxford, OX1 4AW, U.K.

\end{document}